\crefname{theorem}{Theorem}{Theorems}
\crefname{thm}{Theorem}{Theorems}
\crefname{lemma}{Lemma}{Lemmas}
\crefname{lem}{Lemma}{Lemmas}
\crefname{remark}{Remark}{Remarks}
\crefname{prop}{Proposition}{Propositions}
\crefname{defn}{Definition}{Definitions}
\crefname{corollary}{Corollary}{Corollaries}
\crefname{conjecture}{Conjecture}{Conjectures}
\crefname{question}{Question}{Questions}
\crefname{chapter}{Chapter}{Chapters}
\crefname{section}{Section}{Sections}
\crefname{figure}{Figure}{Figures}
\theoremstyle{plain}
\newtheorem{thm}{Theorem}[section]
\newtheorem{lemma}[thm]{Lemma}
\newtheorem{theorem}[thm]{Theorem}
\newtheorem{corollary}[thm]{Corollary}
\newtheorem{prop}[thm]{Proposition}
\newtheorem{conjecture}[thm]{Conjecture}
\theoremstyle{definition}
\theoremstyle{remark}
\newtheorem{remark}[thm]{Remark}
\numberwithin{equation}{section}
\renewcommand{\P}{\mathbb P}
\newcommand{\E}{\mathbb E}
\newcommand{\R}{\mathbb R}
\newcommand{\Z}{\mathbb Z}
\newcommand{\cF}{\mathcal F}
\newcommand{\cG}{\mathcal G}
\newcommand{\sA}{\mathscr A}
\newcommand{\sB}{\mathscr B}
\newcommand{\sC}{\mathscr C}
\newcommand{\sD}{\mathscr D}
\newcommand{\sF}{\mathscr F}
\newcommand{\sG}{\mathscr G}
\newcommand{\sO}{\mathscr O}
\newcommand{\sT}{\mathscr T}
\newcommand{\fG}{\mathfrak G}
\newcommand{\eps}{\varepsilon}
\newcommand{\Aut}{\operatorname{Aut}}
\newcommand{\bP}{\mathbf P}
\newcommand{\bE}{\mathbf E}
\newcommand{\stab}{\operatorname{Stab}}
\newcommand{\opleq}{\preccurlyeq}
\renewcommand{\emptyset}{\varnothing}
\newcommand{\tn}{|\kern-.1em|\kern-0.1em|}
\newcommand{\gr}{\operatorname{gr}}
\newcommand\be{\begin{equation}}
\newcommand\ee{\end{equation}}
\def\eps{\varepsilon}
\newcommand{\sS}{\mathscr{S}}
\title{\bf Locality of the critical probability for transitive graphs of exponential growth}
\renewenvironment{abstract}
 {\par\noindent\textbf{\abstractname.}\ \ignorespaces}
 {\par\medskip}
\author{{\bf Tom Hutchcroft}}
\begin{document}

\date{\small{\today}}

\maketitle

\setstretch{1.1}

\begin{abstract}
Around 2008, Schramm conjectured that the critical probabilities for Bernoulli bond percolation satisfy the following continuity property: If $(G_n)_{n\geq 1}$ is a sequence of transitive graphs converging locally to a transitive graph $G$ and $\limsup_{n\to\infty} p_c(G_n) < 1$, then $ p_c(G_n)\to p_c(G)$ as $n\to\infty$. We verify this conjecture under the additional hypothesis that there is a uniform exponential lower bound on the volume growth of the graphs in question. The result is new even in the case that the sequence of graphs is uniformly nonamenable.

In the unimodular case, this result is obtained as a corollary to the following theorem of independent interest: For every $g>1$ and $M<\infty$, there exist positive constants $C=C(g,M)$ and $\delta=\delta(g,M)$ such that if $G$ is a transitive unimodular graph with degree at most $M$ and growth $\operatorname{gr}(G) := \inf_{r\geq 1} |B(o,r)|^{1/r}\geq g$, then
\[
\bP_{p_c} \bigl(|K_o|\geq n\bigr) \leq C n^{-\delta}
\]
for every $n\geq 1$, where $K_o$ is the cluster of the root vertex $o$. The proof of this inequality makes use of new universal bounds on the probabilities of certain two-arm events, which hold for every unimodular transitive graph. 
\end{abstract}

\section{Introduction}\label{sec:intro}

Let $G=(V,E)$ be an infinite, connected, locally finite graph, and let $\omega_p \in \{0,1\}^E$ be \textbf{Bernoulli-$p$ bond percolation} on $G$, that is, the random subgraph of $G$ obtained by either deleting or retaining each edge of $G$ independently at random with retention probability $p\in [0,1]$. We will be particularly interested in the case that $G$ is \textbf{transitive}, i.e., that for any two vertices $u,v\in V$ there is an automorphism of $G$ mapping $u$ to $v$. Connected components of $\omega_p$ are called \textbf{clusters}. The \textbf{critical probability} is defined to be
\[
p_c(G) :=\inf \{p\in [0,1]: \omega_p \text{ has an infinite cluster almost surely}\}.
\]
Many features of percolation at and near $p_c(G)$ are expected to  depend only on the global, large-scale properties of $G$. For example, it has recently been shown that a transitive graph $G$ has $p_c(G)<1$ (i.e., percolation on $G$ undergoes a nontrivial phase transition) if and only if $G$ has superlinear volume growth \cite{1806.07733}, and it is conjectured that in this case $\omega_{p_c}$ does not have any infinite clusters almost surely. Moreover, it is conjectured that various features of critical percolation on $d$-dimensional Euclidean lattices are described by \emph{universal critical exponents} that depend on the dimension $d$ but not on the choice of $d$-dimensional lattice. 
% For example, it is conjectured that for $d\geq 7$ the law of the volume of the critical cluster should satisfy $\P(|K_o|\geq n) \asymp n^{-1/2}$, while for $d=2$ the asymptotics should be $\P(|K_o|\geq n) \asymp n^{-5/91}$.
% 
% The exponents governing high-dimensional percolation are referred to as the \emph{mean-field} exponents. They are the same exponents as hold for percolation on a $3$-regular tree, and are believed to hold also on every `infinite-dimensional' transitive graph, including in particular  every transitive graph of exponential volume growth.
 % 
 % Progress on these questions has so far been limited primarily to the two-dimensional \cite{werner2007lectures,duminil2016absence},  high-dimensional \cite{MR1043524,fitzner2015nearest,heydenreich2015progress}, and infinite-dimensional \cite{BLPS99b,timar2006percolation,Hutchcroft2016944,Hutchcroftnonunimodularperc} settings, and, despite several remarkable advances over many years, remains rather incomplete even in these cases. 
For detailed background on these questions and the progress that has been made on them, as well as percolation more generally, see e.g.\ \cite{grimmett2010percolation,LP:book,heydenreich2015progress,werner2007lectures}. 

In contrast to the predicted universal behaviour of percolation \emph{at} $p_c$, the \emph{value} of $p_c$ is highly lattice-dependent and is not determined by the large-scale properties of $G$. For example, the square and triangular lattices have very similar large-scale geometry but have $p_c=1/2$ and $p_c= 2\sin(\pi/18)$ respectively \cite[\S3.1]{grimmett2010percolation}. Around 2008, Oded Schramm conjectured that, \emph{subject to the global condition that $p_c$ is not too close to $1$}, the critical probability is not merely undetermined by the global geometry of the graph, but is in fact \emph{entirely determined by the local geometry of the graph} \cite[Conjecture 1.2]{MR2773031}. This conjecture has since emerged as one of the central questions in the study of percolation on transitive graphs, and is the primary subject of this paper.

Let us now  state this conjecture formally. A sequence of transitive graphs $G_n$ is said to \textbf{converge locally} to a transitive graph $G$ if for every $r\geq 1$ there exists $N<\infty$ such that for every $n\geq N$, every vertex $v_n$ of $G_n$, and every vertex $v$ of $G$, there exists a graph  isomorphism from the ball of radius $r$ around $v_n$ in $G_n$ to the ball of radius $r$ around $v$ in $G$ sending $v_n$ to $v$. 
In other words, $G_n$ converges locally to $G$ if the two graphs look the same within divergently large balls around the root. 

\begin{conjecture}[Schramm]
\label{conj:locality}
Let $(G_n)_{n\geq 1}$ be a sequence of transitive graphs converging locally to a transitive graph $G$, and suppose that $\limsup_{n\to\infty} p_c(G_n)<1$. Then $p_c(G_n)\to p_c(G)$ as $n\to\infty$.
\end{conjecture}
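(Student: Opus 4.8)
The plan is to prove the two one-sided inequalities $\liminf_{n\to\infty}p_c(G_n)\ge p_c(G)$ and $\limsup_{n\to\infty}p_c(G_n)\le p_c(G)$ separately. Throughout we may assume that every $G_n$ has the same degree $M=\deg(G)$, since local convergence forces the degree of $G_n$ to equal that of $G$ for all large $n$. Only the second inequality will use the hypothesis $\limsup_n p_c(G_n)<1$, and that is where essentially all of the difficulty lies; the first holds with no extra assumption.

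For the lower bound, fix $p<p_c(G)$. By the sharpness theorem for transitive graphs, in the form of the Duminil-Copin--Tassion finite-volume criterion, there is a finite set $S\ni o$ in $G$ with $\varphi_p^{G}(S)<1$, where $\varphi_p(S)=p\sum_{\{x,y\}\in\partial S,\,x\in S}\bP^{S}_p(o\leftrightarrow x)$ and $\bP^{S}_p$ denotes Bernoulli-$p$ percolation on the edges internal to $S$. Since $\varphi_p(S)$ depends only on the rooted isomorphism type of a bounded neighbourhood of $S$, local convergence gives $\varphi_p^{G_n}(S')<1$ for all large $n$, where $S'\subseteq G_n$ is the image of $S$ under a rooted ball isomorphism; the criterion, applied on $G_n$, then yields $p\le p_c(G_n)$ for all large $n$. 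Letting $p\uparrow p_c(G)$ gives $\liminf_n p_c(G_n)\ge p_c(G)$. This step is robust and uses neither a growth hypothesis nor $\limsup_n p_c(G_n)<1$.

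For the upper bound the goal is a \emph{uniform finite-size criterion for supercriticality}: a single family of increasing events $A_r$, each depending only on the restriction of $\omega_p$ to the ball $B(o,r)$ and defined in the same graph-theoretic way on every transitive graph, together with constants (depending only on $M$ and on an upper bound $q<1$ for $p_c$) such that $\bP^{G'}_{p}(A_r)$ sufficiently close to $1$ forces $p>p_c(G')$. Granting such a criterion, fix $p>p_c(G)$ and choose $p_0\in(p_c(G),p)$. A Grimmett--Marstrand-type renormalisation on $G$ --- exploiting that $p_0>p_c(G)$, so that the infinite cluster is locally ubiquitous and missing connections can be completed by sprinkling up to $p$ --- should give $\bP^{G}_{p}(A_r)\to 1$ as $r\to\infty$. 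Transferring a sufficiently good such estimate to $G_n$ via local convergence and then invoking the criterion (the $G_n$ lie in the relevant class because $\limsup_n p_c(G_n)<1$) gives $p>p_c(G_n)$, hence $p_c(G_n)<p$, for all large $n$; letting $p\downarrow p_c(G)$ completes the argument.

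The main obstacle is the construction of the uniform finite-size criterion together with the renormalisation theorem on $G$ that feeds it: transitive graphs possess no canonical box structure on which to run a Benjamini--Nachmias--Peres or Grimmett--Marstrand block argument, and the estimates must moreover be uniform across the whole sequence. The role of the hypothesis $\limsup_n p_c(G_n)<1$ is to rule out the obvious obstruction --- a sequence with $p_c=1$ converging to a limit with $p_c<1$ --- and, more subtly, to supply the uniform quantitative control over the sequence that the renormalisation requires; converting this hypothesis into usable uniform estimates is itself a central difficulty. I would organise the attack by volume growth. When $\operatorname{gr}(G)>1$ the criterion can be built from nonamenability-flavoured estimates, and this regime is settled by the results of the present paper via the bound $\bP_{p_c}(|K_o|\ge n)\le Cn^{-\delta}$ and the universal two-arm estimates behind it. When $G$ has polynomial growth it is quasi-isometric to a Cayley graph of a virtually nilpotent group, and one adapts the Grimmett--Marstrand and Martineau--Tassion renormalisations. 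The case I expect to be genuinely hard, and the true heart of the conjecture, is that of transitive graphs of intermediate (superpolynomial but subexponential) growth, where a new renormalisation compatible with the $p_c<1$ hypothesis appears to be needed.
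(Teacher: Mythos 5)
The statement you are proving is \cref{conj:locality}, which this paper does not prove: it is stated as an open conjecture, and the paper's contribution (\cref{thm:locality}) establishes it only under the additional hypothesis $\liminf_{n\to\infty}\operatorname{gr}(G_n)>1$, via the critical cluster-volume tail bound of \cref{thm:polybound} in the unimodular case and the half-space exponent $\alpha_{p_c}=1$ from \cite{Hutchcroftnonunimodularperc} in the nonunimodular case. Measured against that, your proposal does not close the gap either, and you essentially say so yourself. Your lower semicontinuity argument is fine and is the standard one the paper alludes to: for $p<p_c(G)$ the Duminil-Copin--Tassion criterion produces a finite set $S$ with $\varphi_p(S)<1$, this is a local quantity, so it transfers to $G_n$ for large $n$ and gives $p\le p_c(G_n)$; no growth or $p_c$ hypothesis is needed. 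The genuine gap is the entire upper semicontinuity half: you posit a ``uniform finite-size criterion for supercriticality'' and a Grimmett--Marstrand-type renormalisation valid on an arbitrary transitive graph with $p_c$ bounded away from $1$, but you construct neither, and constructing them \emph{is} the conjecture --- there is no canonical block structure on a general transitive graph, and the hypothesis $\limsup_n p_c(G_n)<1$ has never been converted into the uniform quantitative estimates such a scheme would need. A proof that begins ``granting such a criterion'' proves nothing beyond the reduction, which was already understood.

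Two further inaccuracies in the programme itself. First, the polynomial-growth regime is not disposed of by saying $G$ is quasi-isometric to a virtually nilpotent Cayley graph and ``one adapts'' Grimmett--Marstrand and Martineau--Tassion: quasi-isometry preserves neither $p_c$ nor the local structure relevant to locality, and the Martineau--Tassion result covers only Abelian Cayley graphs, so this case requires substantive new renormalisation work rather than an adaptation. Second, your trichotomy by growth leaves the intermediate-growth case entirely open, as you admit; but even granting the exponential case (this paper) and a hypothetical polynomial case, one would still need uniformity across a sequence $(G_n)$ whose growth type may differ from that of the limit, which your case split does not address. In short: the part of your argument that is correct (lower semicontinuity, and the reduction of the exponential-growth case to the results proved here) coincides with what is already known and cited in the paper; the part that would constitute a proof of \cref{conj:locality} is absent.
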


It may even be that the condition  $\limsup_{n\to\infty} p_c(G_n)<1$ can be replaced by the weaker condition that $p_c(G_n)<1$ for all $n$ sufficiently large.

In this paper, we verify the conjecture for graph sequences satisfying a uniform exponential lower bound on their volume growth. The result is new even in the case of uniformly nonamenable graphs sequences (i.e., for sequences satisfying $\limsup_{n\to\infty} \rho(G_n)<1$), which was raised as a case of particular interest in \cite{MR2773031}. In the unimodular case our proof also yields a quantitative estimate on the rate of convergence, see \cref{cor:rateofconvergence,remark:rateofconvergence}.

\begin{theorem}
\label{thm:locality}
Let $(G_n)_{n\geq 1}$ be a sequence of transitive graphs converging locally to a transitive graph $G$, and suppose that $\liminf_{n\to\infty} \operatorname{gr}(G_n)>1$. Then $p_c(G_n)\to p_c(G)$ as $n\to\infty$.
\end{theorem}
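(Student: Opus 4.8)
The plan is to prove the two one-sided estimates $\liminf_{n\to\infty}p_c(G_n)\ge p_c(G)$ and $\limsup_{n\to\infty}p_c(G_n)\le p_c(G)$ separately; the first is soft, and the second is where the cluster-size bound stated above does the work. A few harmless reductions first: since $G$ is transitive and locally finite it is $M$-regular for some $M$, and local convergence forces $G_n$ to be $M$-regular for all large $n$; similarly $|B(o,r)|$ agrees in $G_n$ and $G$ once $n$ is large, so from $\liminf_n\operatorname{gr}(G_n)>1$ we may discard finitely many terms and fix $g>1$ with $\operatorname{gr}(G_n)\ge g$ for every $n$. By local convergence we may also pick $r_n\to\infty$ such that the ball of radius $r_n$ about the root is isomorphic as a rooted graph in $G_n$ and in $G$. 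The transfer principle used repeatedly is that any event, or numerical quantity, determined by the configuration on and the graph structure of $B(o,r_n)$ takes the same value under $\bP^{G_n}_p$ and $\bP^{G}_p$.

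For the lower estimate I would invoke the sharpness of the phase transition (Duminil-Copin--Tassion), in the form of the finitary characterization $p_c(H)=\sup\{p:\varphi^H_p(S)<1\text{ for some finite }S\ni o\}$, valid for any transitive graph $H$, where $\varphi^H_p(S)=p\sum_{x\in S}\sum_{y\sim x,\,y\notin S}\bP^H_p(o\leftrightarrow x\text{ using only vertices of }S)$. For a fixed finite $S\ni o$ this quantity depends only on $p$ and the rooted ball of radius $1+\max_{x\in S}d_H(o,x)$ in $H$. Hence if $p<p_c(G)$ there is a finite $S\ni o$ with $\varphi^G_p(S)<1$ (by the characterization and monotonicity of $\varphi$ in $p$); transferring $S$ into $G_n$ for $n$ large gives a finite set there with the same value of $\varphi_p$, whence $p\le p_c(G_n)$. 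Letting $p\uparrow p_c(G)$ yields $\liminf_n p_c(G_n)\ge p_c(G)$.

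For the upper estimate I would first establish that there is $\psi\colon\mathbb N\to(0,\infty)$ with $\psi(k)\to0$ such that $\bP^{G_n}_{p_c(G_n)}(|K_o|\ge k)\le\psi(k)$ for all $k\ge1$ and all $n$: when $G_n$ is unimodular this is precisely the bound $\bP_{p_c}(|K_o|\ge n)\le Cn^{-\delta}$ stated above applied to $G_n$ (legitimate since $G_n$ is $M$-regular with $\operatorname{gr}(G_n)\ge g$), and when $G_n$ is non-unimodular one uses instead the analogous decay estimate for non-unimodular transitive graphs of bounded degree and exponential growth, established by a separate argument exploiting the exponential volume growth in the direction of the modular function. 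Now suppose, for contradiction, that $\limsup_n p_c(G_n)>p_c(G)$; then there are $\eps>0$ and a subsequence along which $p_c(G_n)>p_0:=p_c(G)+\eps$, and since $p_0>p_c(G)$ we have $\theta:=\bP^G_{p_0}(o\leftrightarrow\infty)>0$. Let $A_r$ be the increasing, $B(o,r)$-measurable event that $o$ is joined to $\partial B(o,r)$ along an open path inside $B(o,r)$; an infinite cluster at $o$ reaches $\partial B(o,r)$ along such a path, so $\{o\leftrightarrow\infty\}\subseteq A_r$ and $\bP^G_{p_0}(A_r)\ge\theta$ for all $r$, while plainly $A_r\subseteq\{|K_o|\ge r\}$. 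Hence, along the subsequence,
\begin{align*}
\theta\le\bP^G_{p_0}(A_{r_n})=\bP^{G_n}_{p_0}(A_{r_n})&\le\bP^{G_n}_{p_c(G_n)}(A_{r_n})\\
&\le\bP^{G_n}_{p_c(G_n)}\bigl(|K_o|\ge r_n\bigr)\le\psi(r_n),
\end{align*}
using the transfer principle for the equality, $p_0\le p_c(G_n)$ and monotonicity of $p\mapsto\bP_p(A_{r_n})$ for the second step, and the bound above for the last. As $r_n\to\infty$ and $\psi(k)\to0$, the right side tends to $0$, contradicting $\theta>0$. This gives $\limsup_n p_c(G_n)\le p_c(G)$ and completes the proof.

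The main difficulty lies entirely in the two cited decay estimates at $p_c$—the $Cn^{-\delta}$ bound in the unimodular case and its analogue for non-unimodular transitive graphs—rather than in the deduction above, which is short. Within the deduction the one point requiring care is that the decay must be \emph{uniform} over the family, which is why it matters that $C$ and $\delta$ depend on the graph only through $M$ and $g$; relatedly, one must witness supercriticality through connections \emph{within} $B(o,r)$ so that the transfer principle applies verbatim. (The same computation yields an explicit rate in the unimodular case, with $p_c(G_n)\le p_c(G)+\eps$ as soon as $Cr_n^{-\delta}<\theta^G_{p_c(G)+\eps}$.) For the full Schramm conjecture, the remaining obstruction is to obtain such a decay estimate at $p_c$ without assuming exponential growth.
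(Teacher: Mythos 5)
Your reductions, your proof of $\liminf_n p_c(G_n)\geq p_c(G)$ via the Duminil-Copin--Tassion finite-size criterion, and your deduction of the upper bound from a uniform decay estimate at criticality are all sound, and in the unimodular case your argument is essentially the one used here: \cref{cor:rateofconvergence} is exactly this computation, carried out with the mean-field lower bound \eqref{eq:sharpness} so as to extract an explicit rate of convergence, while your $A_{r_n}$/transfer argument is the qualitative version (which the paper notes suffices for locality). The crucial point you correctly isolate --- that the constants in \cref{thm:polybound} depend only on the degree bound and the growth --- is indeed what makes this work.

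The gap is the nonunimodular case. You assert a bound $\bP^{G_n}_{p_c(G_n)}(|K_o|\geq k)\leq\psi(k)$ with $\psi$ depending only on $M$ and $g$, ``established by a separate argument exploiting the exponential volume growth in the direction of the modular function,'' but no such uniform estimate is proved in this paper or available in the literature, and it is not a routine adaptation: the proof of \cref{thm:polybound} rests on the two-ghost inequality (\cref{thm:twoghostunimod}), whose derivation uses the mass-transport principle and therefore genuinely requires unimodularity. The nonunimodular results of \cite{Hutchcroftnonunimodularperc} do give strong bounds at $p_c$ for each fixed nonunimodular transitive graph, but --- as noted in the introduction --- without effective control of the constants in terms of degree and growth, which is precisely the uniformity your argument needs. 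The paper instead handles this case by a different route (\cref{thm:localitynonunimod}): one first shows that the modular function is continuous under local convergence (\cref{prop:modular,cor:modularcontinuity}; in particular unimodularity is a clopen property, \cref{cor:clopen}), and then proves $\limsup_n p_c(G_n)\leq p_c(G)$ via the half-space quantity $\alpha_p$, using $\alpha_{p_c}=1$ and the strict monotonicity of $\alpha_p$ from \cite{Hutchcroftnonunimodularperc} together with a supremum-exchange semicontinuity argument for the finite-range quantities $A^{G_n}_p(t,r)$. Until you either supply a uniform nonunimodular analogue of \cref{thm:polybound} or replace that step with an argument of this kind, your proof is incomplete; note also that \cref{cor:clopen} lets you reduce to the case where all $G_n$ are nonunimodular when $G$ is, but this does not remove the need for the missing estimate.
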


Here, we define the \textbf{growth} of a transitive graph $G$ to be $\gr(G)=\lim_{r\to\infty} |B(o,r)|^{1/r}$, where $o$ is a vertex of $G$.  The existence of this limit follows by submultiplicativity, and is clearly independent of the choice of $o$ by transitivity. A transitive graph $G$ is said to have \textbf{exponential growth} if $\gr(G)>1$. Similarly, the \textbf{spectral radius} of a graph $G$ is defined to be the exponential decay rate of the return probabilities $\rho(G)=\lim_{n\to\infty} p_{2n}(o,o)^{1/2n}$, where $p_{2n}(o,o)$ denotes the probability that a simple random walk on $G$ started at $o$ is at $o$ again at time $2n$. Similarly to above, this limit exists by supermultiplicativity and does not depend on the choice of root vertex. (This holds even without transitivity, see \cite[Proposition 6.6]{LP:book}.) The graph $G$ is said to be \textbf{nonamenable} if $\rho(G)<1$ and \textbf{amenable} otherwise. Every transitive nonamenable graph has exponential growth, but the converse does not hold. (In fact we have the quantitative bound $\gr(G) \geq \rho(G)^{-2}$, see \cite[Corollary 5.2]{MR986363}.)

We now briefly survey previous work on \cref{conj:locality}.
Pete \cite[\S14.2]{Pete} observed that the lower semi-continuity of $p_c$ (i.e., the statement that $\liminf_{n\to\infty} p_c(G_n) \geq p_c(G)$ whenever the graphs $(G_n)_{n\geq 1}$ are transitive and $G_n \to G$ locally) can easily be deduced from  the  mean-field lower bound on the percolation probability at $p_c+\eps$ \cite{MR864316,duminil2015new,aizenman1987sharpness}. Thus, to  prove \cref{conj:locality} it suffices to establish the upper semi-continuity estimate $\limsup_{n\to\infty}p_c(G_n)\leq p_c(G)$ under the hypothesis that $\limsup_{n\to\infty}p_c(G_n)<1$.  An alternative proof of lower semi-continuity is given in \cite[Page 4]{duminil2015new}. 
% Note that the hypothesis $\limsup_{n\to\infty} p_c(G_n)<1$ is required in the statement of \cref{conj:locality} to avoid trivial counterexamples.
In their seminal paper \cite{MR1068308}, Grimmett and Marstrand studied percolation on \emph{slabs}
 of the form $\Z^{k} \times [0,n]^{d-k}$ in $\Z^d$, $d\geq 3$, and proved via a dynamical renormalization argument that
\[
p_c\bigl(\Z^{k} \times [0,n]^{d-k}\bigr)\xrightarrow[n\to\infty]{} p_c(\Z^d)
\]
whenever $2\leq k \leq d$. (Here $\Z^d$ and $\Z^k \times [0,n]^{d-k}$ are equipped with their usual hypercubic graph structure.) This theorem is of fundamental importance in the study of supercritical percolation in $\Z^d$ for $d\geq 3$. Although not an instance of \cref{conj:locality} since the slabs $\Z^{k} \times [0,n]^{d-k}$ are not transitive, the Grimmett-Marstrand Theorem does also imply the weaker statement that
\begin{equation}
\label{eq:SM}
p_c\bigl(\Z^{k} \times (\Z/n\Z)^{d-k}\bigr)\xrightarrow[n\to\infty]{} p_c(\Z^d)
\end{equation}
whenever $2 \leq k\leq d$, which is a special case of \cref{conj:locality}. This result was recently generalized by Martineau and Tassion \cite{MR3630298}, who proved \cref{conj:locality} in the special case that $G_n$ are all Cayley graphs of Abelian groups. 
 Closer to our setting, Benjamini, Nachmias, and Peres \cite{MR2773031} proved that \cref{conj:locality} holds for uniformly nonamenable graph sequences converging to a tree, while Song, Xiang, and Zhu \cite{song2014locality} showed that \cref{conj:locality} holds for uniformly nonamenable graph sequences in which every graph satisfies a certain very strong spherical symmetry property (in both of these settings the hypothesis $\limsup_{n\to\infty} p_c(G_n)<1$ holds automatically since $p_c \leq \gr^{-1} \leq \rho^2$ \cite{Hutchcroft2016944}).
Related questions of locality for other models such as self-avoiding walk \cite{benjamini2013euclidean,1412.0150,1704.05884} and the random-cluster model \cite{1707.07626} have also been studied.

% \medskip

% The key to the proof of \cref{thm:locality} is to develop a much better quantitative understanding of critical percolation on transitive graphs of exponential growth.
The unimodular case of \cref{thm:locality} will be deduced as an immediate corollary of the following theorem, which gives quantitative control on the tail of the volume of critical clusters in transitive graphs of exponential growth and is of independent interest. 
(The nonunimodular case is handled via a separate argument which invokes the results of \cite{Hutchcroftnonunimodularperc}.) 
The proof of this theorem also yields a new proof that critical percolation on any transitive graph of exponential growth has no infinite clusters. We write $\bP_p$ for the law of $\omega_p$, write $K_v$ for the cluster\footnote{By abuse of notation we use $K_v$ to denote both the relevant subgraph of $\omega_p$ and the set of vertices it contains. The precise meaning should be clear from context.} of $v$ in $\omega_p$, and write $E(K_v)$ for the set of edges that \textbf{touch} $K_v$, i.e., have at least one endpoint in $K_v$.

\begin{theorem}
\label{thm:polybound}
For every $g>1$ and $M<\infty$ there exist constants $C=C(g,M)$ and $\delta=\delta(g,M)$ such that for every transitive unimodular graph $G$ with $\deg(o)\leq M$ and $\operatorname{gr}(G)\geq g$, the bound
\begin{equation}
\label{eq:polybound}
\bP_{p} (|E(K_o)|\geq n) \leq C n^{-\delta}
\end{equation}
holds for every $p\leq p_c$ and $n\geq 1$.
\end{theorem}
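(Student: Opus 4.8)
The plan is to split the tail estimate at $p=p_c$ into two polynomial bounds — decay of an intrinsic one-arm probability and a bound on the expected intrinsic volume in a ball — and then optimize over an intrinsic radius. Write $\tilde d$ for the chemical distance inside $\omega_p$, put $\tilde B_r=\{x:\tilde d(o,x)\le r\}$, $\tilde S_r=\{x:\tilde d(o,x)=r\}$, and $f(r):=\bP_{p_c}(\tilde S_r\ne\emptyset)$, and decompose
\[
\bP_{p_c}\bigl(|E(K_o)|\ge n\bigr)\;\le\;f(r)\;+\;\bP_{p_c}\bigl(|E(K_o)|\ge n,\ K_o\subseteq\tilde B_r\bigr).
\]
On the second event $E(K_o\cap\tilde B_r)=E(K_o)$, so Markov's inequality bounds it by $n^{-1}\bE_{p_c}|E(K_o\cap\tilde B_r)|\le Mn^{-1}\sum_{k=0}^{r}\bE_{p_c}|\tilde S_k|$. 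Hence it suffices to prove, with constants depending only on $g$ and $M$: (a) a polynomial intrinsic one-arm bound $f(r)\le Cr^{-\eps}$; and (b) a uniform bound on intrinsic sphere sizes, $\bE_{p_c}|\tilde S_r|\le C$. Indeed, these give $\bP_{p_c}(|E(K_o)|\ge n)\le Cr^{-\eps}+CMrn^{-1}$, and choosing $r\asymp n^{1/(1+\eps)}$ yields the theorem with $\delta=\eps/(1+\eps)$; the case $p<p_c$ follows from monotonicity of $p\mapsto\bP_p(|E(K_o)|\ge n)$. (Equivalently, one may phrase everything in terms of the magnetization $M_p(h):=\bE_p[1-(1-h)^{|E(K_o)|}]$, using $\bP_p(|E(K_o)|\ge n)\le e\,M_p(1/n)$, and reduce the theorem to $M_{p_c}(h)\le Ch^{\delta}$.)

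Both (a) and (b) are to be obtained, through a joint induction on $r$, from universal bounds on a suitable two-arm event that hold on \emph{every} unimodular transitive graph with no growth hypothesis. The heuristic is the branching-process picture: the ``height'' and the ``width'' of a critical cluster are both produced by branching, and each branch point is the site of a two-arm event. Accordingly, if $\tilde S_r\ne\emptyset$ or $\tilde S_r$ is large, one locates many vertices $z\in K_o$ at which $\omega_{p_c}$ displays a disjoint-occurrence event of the form $\{o\leftrightarrow z\}\circ\{z\leftrightarrow x\}\circ\{z\leftrightarrow y\}$; applying a tree-graph/BK bound to each and summing over candidate $z$ would require a union bound over $\tilde S_r$, which is hopeless since $|\tilde S_r|$ can be as large as the ambient sphere and hence comparable to $g^r$. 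The mass-transport principle — the sole use of unimodularity — replaces this union bound by a correctly weighted average, yielding self-improving inequalities for $f(r)$ and for $\bE_{p_c}|\tilde S_r|$. Exponential volume growth, together with the bound $p_c(G)\le g^{-1}<1$, is what turns these into genuine estimates: with so much ambient room the cluster must be essentially tree-like at $p_c$ — a configuration that is too ``wide'' or too ``tall'' would force an open subgraph larger than the volume available at its scale — and the resulting slack gives the uniform bound (b) and a strict contraction $f(2r)\le(1-c)f(r)$, whence $f(r)\le Cr^{-\eps}$ with $\eps=\log_2\tfrac1{1-c}$. (The contraction itself uses (b): conditionally on $\tilde S_r$, FKG bounds the probability that some of the at most $O(r)$ subtrees rooted there survives the next $r$ levels by $1-(1-f(r))^{|\tilde S_r|}$, and concavity plus $\bE_{p_c}|\tilde S_r|\le C$ make this at most $1-c$ in conditional expectation given $\tilde S_r\ne\emptyset$.)

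The main obstacle is the universal two-arm estimate itself: pinning down the right notion of ``two-arm event at scale $r$'', proving a mass-transport/BK bound for it on an arbitrary unimodular transitive graph, and then arranging the combinatorics so that it closes into a joint recursion for $f(r)$ and for $\bE_{p_c}|\tilde S_r|$. A secondary difficulty is the interface between the intrinsic cluster geometry and the extrinsic growth hypothesis $\gr(G)\ge g$: one must compare $|\tilde S_r|$ and $|E(K_o\cap\tilde B_r)|$ with the ambient balls $B(o,r)$, and it is here that bounded degree and $p_c(G)\le g^{-1}$ enter; this interface is also what makes it natural to track $|E(K_o)|$ rather than $|K_o|$, since the relevant mass transports are indexed by edges touching the cluster. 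Finally one checks that the power-law bound is vacuous for small $n$ after enlarging $C$, and that every constant depends on $G$ only through $g$ and $M$, which is automatic once all intermediate estimates are stated in those terms.
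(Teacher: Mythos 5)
Your reduction (split at an intrinsic radius, one-arm bound plus Markov on the intrinsic volume, optimize $r\asymp n^{1/(1+\eps)}$, monotonicity for $p<p_c$) is fine as a skeleton, but the two inputs it rests on are not proved, and one of them is far stronger than anything available. Statement (b), $\bE_{p_c}|\tilde S_r|\le C$ uniformly in $r$ with $C=C(g,M)$, is an intrinsic mean-field volume estimate ($\bE_{p_c}|\tilde B_r|\lesssim r$). As the introduction of this paper stresses, mean-field behaviour for transitive graphs of exponential growth is conjectural, and before this work not even a polynomial tail bound for $|K_o|$ was known in the nonamenable case; the theorem you are asked to prove only delivers an exponent $\delta\le 1/6$. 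So your plan reduces the theorem to an unproven statement that is substantially stronger than the theorem itself, supported only by a heuristic (``joint induction'', ``branch points are two-arm events'', ``too wide or too tall forces an open subgraph larger than the volume available''). Nowhere in the proposal does a quantitative inequality appear in which $\gr(G)\ge g$ or $p_c\le g^{-1}$ is actually used. The contraction step for $f$ has the same problem: it presupposes (b), it needs an exploration argument (independence of the unexplored edges beyond $\tilde S_r$), not just FKG, and the claim that there are only $O(r)$ ``subtrees'' rooted on $\tilde S_r$ is unjustified. Finally, the ``universal two-arm estimate'' that you correctly identify as the main obstacle is exactly what you do not supply; a BK/tree-graph bound at branch points is not known to close into such a recursion in this generality.

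For comparison, the paper's argument uses a different mechanism at both places where you are vague. The universal two-arm input is \cref{cor:twoarmunimod}: for every unimodular transitive graph, $\bP_p(\sS_{e,n})\le 66\, d\,[(1-p)/(pn)]^{1/2}$, proved not by BK but by an Aizenman--Kesten--Newman-type summation-exchange carried out in infinite volume via the mass-transport principle, a ghost field, and Doob's $L^2$ maximal inequality for the cluster-exploration martingale. The growth hypothesis then enters only through the two-point function bound \eqref{eq:kappabound}, $\kappa_p(k)\le\gr(G)^{-k}$ for $p\le p_c$: choosing $u,v$ at distance $k$ with $\tau_p(u,v)\le\gr^{-k}$ and opening the $k$ edges of a geodesic one at a time (insertion tolerance, Harris--FKG) gives $p^k\bigl[\bP_p(|E(K_o)|\ge n)^2-\gr^{-k}\bigr]\le \frac{p}{1-p}\sup_{e}\bP_p(\sS_{e,n})$, and optimizing over $k$ yields \eqref{eq:polybound} with explicit constants depending only on $(g,M)$, then $p\uparrow p_c$ by continuity. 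If you want to pursue your route you must actually prove (a) and (b) with constants depending only on $(g,M)$, which is not currently within reach; the realistic fix is to keep your two-arm philosophy but replace the intrinsic one-arm/sphere recursion by the two-point-function-plus-surgery step above.
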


\begin{remark}
In order to apply \cref{thm:polybound} in the proof of \cref{thm:locality}, it is very important that all the constants depend only on the local geometry of the graph (here this dependence arises only through the degree) and the growth (which is the only aspect of the global geometry that we are assuming control of).
\end{remark}

Let us now discuss how \cref{thm:polybound} relates to previous results on critical percolation. 
It is conjectured that percolation on any transitive graph of exponential growth should have mean-field behaviour, so that in particular we should have that $\P(|K_o|\geq n) \preceq n^{-1/2}$ as we do on trees and on high-dimensional lattices. 
However, this conjecture is very far away from being proven, and even for nonamenable transitive graphs it was not previously known that the volume of the critical cluster satisfied any polynomial tail bound. 
Even in several of the special cases in which the conjecture has been proven to hold, the proofs do not give effective control of the constants that arise and therefore cannot be used to prove locality \cite{Hutchcroftnonunimodularperc,1804.10191}. The one case in which it is known how to prove this conjecture in a sufficiently quantitative way that locality can be deduced is under perturbative assumptions, i.e. that the graph is not just nonamenable but either \emph{highly nonamenable} \cite{MR1833805,MR1756965} or of \emph{high girth} \cite{MR3005730}. In particular, it can be deduced from the techniques of \cite{MR1756965,MR3005730} that if $G_n \to G$ locally and $\limsup_{n\to\infty} \rho(G_n)<1/2$ then $p_c(G_n)\to p_c(G)$.

 In their seminal paper \cite{BLPS99b}, Benjamini, Lyons, Peres, and Schramm proved that critical percolation on any nonamenable unimodular transitive graph does not have any infinite clusters a.s. However, their proof is ineffective in the sense that it cannot be used to establish any explicit bounds on the tail of the volume of critical clusters. Later, Tim\'ar \cite{timar2006percolation} proved that critical percolation on any \emph{nonunimodular} transitive graph does not have \emph{infinitely many} infinite clusters a.s., again via an ineffective argument. More recently, we proved that critical percolation on any transitive graph of exponential growth cannot have a \emph{unique} infinite cluster \cite{Hutchcroft2016944} a.s., which together with the aforementioned results of \cite{BLPS99b,timar2006percolation} implied that critical percolation on any transitive graph of exponential growth
does not have any infinite clusters a.s. The proof of \cite{Hutchcroft2016944} also established that the quantitative bound
\begin{equation}
\label{eq:kappabound}
\kappa_{p}(n):=\inf\bigl\{\tau_{p}(u,v) : d(u,v)\leq n\bigr\} \leq \gr(G)^{-n}
\end{equation}
holds for every $p\leq p_c$, 
where $\tau_p(u,v)$ denotes the probability that $u$ and $v$ are connected in $\omega_p$.
This bound plays an important role in the proofs of the main theorems of this paper.
 However, the rest of the proof given in \cite{Hutchcroft2016944} that all critical clusters are finite is once again ineffective and does not give any control of the tail of the volume of critical clusters. (The inequality \eqref{eq:kappabound} does directly imply the special case of \cref{thm:locality} in which the limit graph $G$ is amenable. The reader may find it an enlightening exercise to prove this.)

\medskip

\noindent \textbf{Proof sketch.} 
To prove \cref{thm:polybound}, we develop a general method of converting two-point function bounds such as \eqref{eq:kappabound} into volume-tail bounds such as \eqref{eq:polybound}. To do this, we apply a variation on the uniqueness proof of Aizenman, Kesten, and Newman \cite{MR901151} to establish a universal bound on the probability of the two-arm type event $\sS_{e,n}$ that the edge $e$ is closed and that its endpoints are in distinct finite clusters each of which touches at least $n$ edges (\cref{cor:twoarmunimod}). 
 This bound holds for \emph{every} unimodular transitive graph and every $p\in (0,1]$, and is discussed in detail in the next subsection. 
We then apply a surgery argument using the Harris-FKG inequality and insertion-tolerance to get that
\[
p^k \left[ \bP_p(|E(K_o)|\geq n)^2 -\kappa_p(k)\right] \leq \frac{p}{1-p}\sup_{e\in E}\bP_p(\sS_{e,n})
\]
for every $0<p<p_c$ and $n,k\geq 1$.
\cref{thm:polybound} then follows from \eqref{eq:kappabound} and \cref{cor:twoarmunimod} by direct calculation.

\begin{remark}
\label{remark:exponent}
The proof of \cref{thm:polybound} yields the simple explicit bound
\begin{equation}
\bP_{p_c} \bigl(|E(K_o)|\geq n\bigr) \leq \sqrt{2} \left( 66 d \left[ \frac{{1}}{(1-{p_c})n} \right]^{1/2} \right)^{1/2\alpha}  \leq \frac{5 d^{1/4}}{(1-\gr^{-1})^{1/8}} n^{-\beta}
\end{equation}
where $\alpha =1 -\log p_c (G) /\log \gr(G) \geq 2$ and $\beta = 1/4(1+ \log(d-1) /\log \gr)$.
With further work it is possible to use our methods to get an estimate of the form
\[
\bP_{p_c} \bigl(|E(K_o)|\geq n\bigr) \leq n^{-1/(4\alpha-2)+o(1)},
\]
see \cref{remark:improvedpolybound}. 
It seems that a new idea is required to improve the exponent any further than this.
 Since $p_c \leq \gr^{-1}$ for every transitive graph, the best exponent that these bounds can ever give is $1/6$, and in general the exponent we obtain can be much worse than this. 
\end{remark}

% \begin{remark}
% The method for converting two-point function bounds to volume-tail bounds that we use here is not specific to the case that $\kappa_{p_c}(n)$ decays exponentially, and has some applications beyond the exponential growth setting. Indeed, in forthcoming work with Hermon, we build upon the methods of this paper to prove the continuity of the phase transition for percolation on certain groups of intermediate growth.
% \end{remark}

\subsection{The two-ghost inequality}

As discussed above, in the unimodular case, a central input to the proofs of our main theorems is a universal bound on the probabilities of certain two arm-events that holds for every unimodular transitive graph. We call this bound the \emph{two-ghost inequality}. Our proof of this bound was inspired by the work of Aizenman, Kesten, and Newman \cite{MR901151}, who used a similar method to prove that percolation on $\Z^d$ has at most one infinite cluster almost surely. See \cite{MR929129} for a simplified exposition of their proof. Roughly speaking, their proof uses an ingenious summation-exchange argument to rewrite the probability of a certain two-arm event $\sA$ in terms of an expectation roughly of the form $\E[T^{-1}Z_T \mathbbm{1}(T<\infty) \mathbbm{1}(\sB)]$, where $(Z_n)_{n\geq 0}$ is a martingale with bounded, i.i.d.\ increments, $T$ is a stopping time, and $\sB$ is a certain one-arm event for the percolation configuration. On the event $\sB$ the stopping time $T$ must be large, and one can therefore easily bound this expectation using e.g.\ Doob's $L^2$ maximal inequality to obtain that the probability of the two-arm event is small as desired. (They phrase their argument somewhat differently than this, using large-deviation estimates rather than martingale techniques, but the core idea of their proof is as above.) 

The proof of \cite{MR901151} also yields the following quantitative estimate for percolation on $\Z^d$. Let $e$ be an edge of $\Z^d$ and let $\sA_{n}$ be the event that $e$ is closed and that the two endpoints of $e$ are in distinct clusters each of which has diameter at least $n$. Then for every $p\in (0,1)$ there exists a constant $C_{d,p}$ such that
\begin{equation}
\label{eq:AKNoriginal}
\bP_p(\sA_n) \leq C_{d,p} \frac{\log n}{\sqrt{n}}.
\end{equation}
% (In fact, if one uses martingale techniques rather than large-deviations estimates then the $\log n$ term does not arise.)
 See \cite{MR3395466} for a discussion of and improvement to this bound. 
Although it is possible to adapt the Aizenman-Kesten-Newman argument to prove uniqueness on any amenable transitive graph, the  bound one  obtains on $\bP_p(\sA_n)$ becomes increasingly poor as the isoperimetry improves, 
 and we do not obtain \emph{any} information about percolation on \emph{nonamenable} graphs.

In this paper, we prove a variation on \eqref{eq:AKNoriginal} that applies universally to every unimodular transitive graph, and improves on the bound \eqref{eq:AKNoriginal} even in the case of $\Z^d$.  Aside from this universality, the most significant differences between our inequality and \eqref{eq:AKNoriginal} are as follows: Firstly, we work with two-arm events in which \emph{at least one of the two clusters is finite}, so that in particular our inequality does not directly\footnote{In fact, it is also possible to prove uniqueness using an extension of our techniques, and this proof also yields interesting quantitative information about supercritical percolation on amenable transitive graphs. See \cref{remark:improvedtwoghost} for a discussion.} imply uniqueness of the infinite cluster as \eqref{eq:AKNoriginal} does.  Such a modification is of course necessary in order to obtain an inequality that is valid in the nonamenable setting. Secondly, rather than studying the \emph{diameter} of clusters, we study their \emph{volume}. This is done somewhat indirectly by introducing a \emph{ghost field} as in \cite{aizenman1987sharpness}. This modification allows us to work directly in infinite volume rather than working in finite volume and taking limits as in  \cite{MR901151}, and also leads to stronger results since the volume is an upper bound on the diameter. In particular, we apply the mass-transport principle to carry out the summation-exchange argument of \cite{MR901151} directly in infinite volume. This is where the assumption of unimodularity is required.

We now introduce the ghost field and state the two-ghost inequality.
 % We begin by stating our results in the case that the graph is \emph{unimodular}; see [ref] for the definitions of unimodularity and nonunimodularity. 
  Let $G=(V,E)$ be a connected, locally finite graph, and let $p\in (0,1)$ and $h>0$. Let $\omega_p\in \{0,1\}^E$ be Bernoulli-$p$ bond percolation on $G$. Independently of $\omega_p$, let $\cG_h \in \{0,1\}^E$  be a random subset of $E$ where each edge $e$ of $E$ is included in $\cG_h$ independently at random, with probability $1-e^{-h}$ of being included. (It is more standard to put the ghosts on the vertices, but putting them on the edges is convenient for the calculations we do here.) Following \cite{aizenman1987sharpness}, we call $\cG_h$ the \textbf{ghost field} and call an edge \textbf{green} if it is included in $\cG_h$. We write $\bP_{p,h}$ for the joint law of $\omega_p$ and $\cG_h$, and define $\sT_e$ to be the event that $e$ is closed and that the endpoints of $e$ are in distinct clusters, each of which touches some green edge, and at least one of which is finite.

% More generally, for each $e\in E$ and $N \in [1,\infty]$, define $\sT_e^N$ to be the event that $e$ is closed, the endpoints of $e$ are in distinct clusters, that $|K_{e^-}|,|K_{e^+}|<N$ and that 
% both endpoints of $e$ are connected to a green vertex. Thus, $\sT_e^\infty=\sT_e$ and 
% \[
% \mathbf{P}_{p,h}(\sT_e) = \bE_{p,h}\left[\bigl(1-e^{-h|K_{e^-}|}\bigr)\bigl(1-e^{-h|K_{e^-}|}\bigr)\mathbbm{1}(|K_{e^-}|,|K_{e^+}|<\infty,\, e^- \nleftrightarrow e^+) \right].
% \]
\begin{theorem}
\label{thm:twoghostunimod}
 Let $G$ be a unimodular transitive graph of degree $d$. Then 
\begin{equation}
\label{eq:twoghostunimod}
\bP_{p,h}\bigl(\sT_{e}\bigr) \leq 33 \cdot d\left[\frac{1-p}{p}h\right]^{1/2}
% 4(\sqrt{2}+1) \sqrt{\frac{1-p}{p}h}+\frac{1-p}{p}h
\end{equation}
for every $e\in E$, $p\in (0,1]$ and $h>0$.
\end{theorem}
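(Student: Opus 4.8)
The plan is to adapt the Aizenman--Kesten--Newman summation-exchange argument to infinite volume using the mass-transport principle, which is exactly where unimodularity enters. First I would set up the relevant martingale. Fix an edge $e=\{x,y\}$ and work on the event $\sT_e$; on this event $e$ is closed and $x,y$ lie in distinct clusters $K_x, K_y$, at least one of which, say $K_x$, is finite, and each of which touches a green edge. I would explore $K_x$ edge-by-edge (revealing the status of edges touching the cluster in some canonical order depending only on the graph structure, not on a root), recording at each step whether the newly revealed edge is open; the key object is the discrete-time process $Z_t = (\text{number of open edges revealed by time } t) - p \cdot (\text{number of edges revealed by time } t)$, which is a martingale with increments bounded by $1$ in absolute value and variance parameter $p(1-p)$ per step. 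The exploration terminates at a stopping time $T$ equal to $|E(K_x)|$ (roughly), and on $\sT_e$ we have $T<\infty$ but also, because $K_x$ must touch a green edge, a geometric/exponential cost in $T$: conditionally on the exploration, the probability that $K_x$ contains a green edge is $1-(1-(1-e^{-h}))^{T} = 1 - e^{-hT} \le hT$.

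The heart of the matter is the combinatorial identity that rewrites $\bP_{p,h}(\sT_e)$ as an expectation of the form $\E\bigl[\text{(something like } T^{-1} Z_T\text{)} \cdot \mathbbm 1(\sB)\bigr]$. The mechanism, following \cite{MR901151,MR929129}, is this: consider summing over all edges $f$ that could serve as ``the first pivotal-looking open edge'' adjacent to the exploration, or more precisely one exploits a bijection/double-counting between (i) configurations where $e$ is closed separating two green-touching clusters and (ii) configurations obtained by toggling an edge on the boundary, weighted appropriately. Carrying this out on a transitive graph requires replacing the sum over a fixed box (as in $\Z^d$) with a mass-transport: each vertex/edge sends mass to the edge $e$ whose removal it is ``responsible'' for in the exploration of its own cluster, and unimodularity guarantees mass-in equals mass-out so that the per-edge bound transfers to a global bound. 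After this exchange one is left with bounding $\E\bigl[T^{-1}|Z_T| \,\mathbbm 1(T<\infty)\bigr]$ against the contribution of the ghost field, and using $1-e^{-hT}\le hT$ together with Doob's $L^2$-maximal inequality (or the optional stopping/Azuma-type bound giving $\E[Z_T^2 \mathbbm 1(T=t)] \le p(1-p) t$) one obtains, by Cauchy--Schwarz,
\[
\bP_{p,h}(\sT_e) \;\lesssim\; \sum_{t\ge 1} \frac{1}{t}\,\sqrt{\E[Z_t^2]}\cdot (\text{green-edge probability} \le ht)^{1/2} \;\lesssim\; \sqrt{\tfrac{1-p}{p}\,h}\cdot(\text{degree factor}),
\]
after summing the series, which converges because the $t^{-1}\cdot t^{1/2}\cdot t^{1/2}$ factors are offset by the decay of $\bP(T=t)$ or by truncating the sum at the natural scale $t\sim 1/h$.

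The main obstacle I anticipate is the bookkeeping in the summation-exchange step: making precise which edge each unit of mass is transported to, ensuring the exploration procedure is defined canonically (equivariantly) so that the mass-transport principle applies, and handling the fact that we only control one of the two clusters (the finite one) while the other may be infinite — this asymmetry is what forces the ``at least one of which is finite'' hypothesis and requires care that the exploration of $K_x$ never needs to know anything about $K_y$. A secondary technical point is dealing with the case $p=1$ (where $Z_t\equiv 0$ and $\sT_e$ is automatically the event that a finite cluster touches a green edge, which must be shown to have the claimed probability directly) and verifying that all constants are genuinely universal, depending only on the degree $d$ through the branching ratio of the exploration. Once the per-edge inequality $\bP_{p,h}(\sT_e)\le 33 d\sqrt{(1-p)h/p}$ is established for a fixed $e$, transitivity makes it automatically uniform over $e$, completing the proof.
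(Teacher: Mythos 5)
Your outline follows the same architecture as the paper's proof: an Aizenman--Kesten--Newman summation-exchange performed directly in infinite volume via the mass-transport principle (which is precisely where unimodularity enters), reducing $\bP_{p,h}(\sT_e)$ to an expectation of the form $\bE_p\bigl[T^{-1}|Z_T|(1-e^{-hT})\mathbbm{1}(T<\infty)\bigr]$ for the cluster-exploration martingale, followed by Doob's $L^2$ maximal inequality. However, the two central steps are left in a state where, as written, the argument does not close. For the exchange step, the paper's identity is concrete: one counts finite ghost-touching clusters meeting a closed edge and uses insertion tolerance at that edge (the event that every cluster touching $e$ is finite and some such cluster touches $\cG_h$ is independent of $\omega(e)$) to produce the signed edge-weight $\mathbbm{1}(\omega(e)=0)-\frac{1-p}{p}\mathbbm{1}(\omega(e)=1)$; this weight is then spread uniformly over the $2|E(K)|$ oriented edges of each finite ghost-touching cluster $K$ and exchanged by mass transport, and the point is that the cluster sum of these weights is exactly $\frac{1}{p}\bigl(p|\partial K|-(1-p)|E_\circ(K)|\bigr)$, i.e.\ $-Z_T/p$ for the exploration of $K$. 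Your ``bijection/double-counting \ldots weighted appropriately'' gestures at this but never identifies the weight or the insertion-tolerance step that creates the $\frac{1-p}{p}$ factor; without that cancellation there is nothing small to transport.

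The more serious issue is the final summation. As displayed, $\sum_{t\geq 1} t^{-1}\sqrt{\E[Z_t^2]}\,(ht)^{1/2}$ has summands of order $\sqrt{p(1-p)h}$ uniformly in $t$, so the series diverges; truncating at $t\sim 1/h$ yields $\sqrt{p(1-p)/h}$, which blows up as $h\to 0$ rather than giving the claimed $h^{1/2}$; and appealing to ``decay of $\bP(T=t)$'' is circular, since no a priori tail bound on $|E(K_o)|$ is available --- such a bound is essentially what \cref{thm:polybound} is being proved to obtain. Moreover, putting the ghost probability under a square root via Cauchy--Schwarz, as your display does, costs at least a $\log(1/h)$ factor even after a correct reorganization, so it cannot deliver the clean bound in the statement. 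The paper's remedy is to keep the deterministic factor $(1-e^{-hT})/T$ at first power (no Cauchy--Schwarz on the ghost), decompose according to dyadic scales $2^k\leq T\leq 2^{k+1}$ using that $(1-e^{-hx})/x$ is decreasing, apply Doob once per scale to get $\bE_p[\max_{0\leq n\leq 2^{k+1}}Z_n^2]\leq 8p(1-p)2^k$, and then sum $\sum_{k\geq 0}(1-e^{-h2^k})2^{-k/2}$, which is geometrically dominated on both sides of $k\approx\log_2(1/h)$ and gives $O(\sqrt{p(1-p)h})$ with no logarithmic loss; combined with the factor $2\deg(o)/p$ from the exchange step this yields the stated $33\,d\,[(1-p)h/p]^{1/2}$.
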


The bound of \cref{thm:twoghostunimod} can easily be converted into a bound on a two-arm type event that does not refer to the ghost field. 
Let $\sS_{e,n}$ be the event that $e$ is closed and that the endpoints of $e$ are in distinct clusters, each of which touches at least $n$ edges, and at least one of which is finite.

\begin{corollary}
\label{cor:twoarmunimod}
 Let $G$ be a unimodular transitive graph of degree $d$. Then 
\begin{equation}
\label{eq:twobodyunimod}
\bP_p(\sS_{e,n})\leq 66 \cdot d\left[\frac{1-p}{p n}\right]^{1/2}
\end{equation}
for every $e\in E$, $p\in(0,1]$ and $n\geq 1$.
\end{corollary}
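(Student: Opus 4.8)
The plan is to deduce \cref{cor:twoarmunimod} from \cref{thm:twoghostunimod} by the standard device of converting a ghost-field estimate into an estimate on an explicit two-arm event. The key observation is that if a cluster touches at least $n$ edges then, conditionally on the percolation configuration $\omega_p$ and independently of it, that cluster is very likely to touch a green edge once the ghost intensity is tuned to $h \asymp 1/n$. So I would fix $e\in E$, $p\in(0,1]$ and $n\geq 1$, set $h=2/n$, and work under the joint law $\bP_{p,h}$ of $\omega_p$ and the ghost field $\cG_h$.

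Next I would condition on $\omega_p$ and argue pointwise on the event $\sS_{e,n}$. On $\sS_{e,n}$ the endpoints $u,v$ of $e$ lie in distinct clusters $K_u,K_v$ with $|E(K_u)|,|E(K_v)|\geq n$, with $e$ closed, and with at least one of $K_u,K_v$ finite; all of these properties are $\omega_p$-measurable. Since $\cG_h$ is independent of $\omega_p$ and includes each edge independently with probability $1-e^{-h}$, the conditional probability that $K_u$ touches no green edge is $e^{-h|E(K_u)|}\leq e^{-hn}$, and likewise for $K_v$. A union bound then shows that, conditionally on $\omega_p$ and on $\sS_{e,n}$, both clusters touch a green edge with probability at least $1-2e^{-hn}$, and on that further event $\sT_e$ occurs. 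Integrating over $\omega_p$ gives
\[
\bP_{p,h}(\sT_e)\;\geq\;\bigl(1-2e^{-hn}\bigr)\,\bP_p(\sS_{e,n}).
\]
One mild point to flag: $e$ itself lies in $E(K_u)\cap E(K_v)$ (as may some closed edges between the two clusters), so the two ``touches a green edge'' events are not independent, which is why I use a union bound rather than multiplying the two conditional probabilities — this costs nothing here.

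Finally, combining the displayed inequality with \cref{thm:twoghostunimod} and the choice $h=2/n$ yields
\[
\bP_p(\sS_{e,n})\;\leq\;\frac{1}{1-2e^{-2}}\cdot 33\,d\left[\frac{1-p}{p}\cdot\frac{2}{n}\right]^{1/2}
\;=\;\frac{33\sqrt{2}}{1-2e^{-2}}\,d\left[\frac{1-p}{pn}\right]^{1/2}\;\leq\;66\,d\left[\frac{1-p}{pn}\right]^{1/2},
\]
since $33\sqrt{2}/(1-2e^{-2})<64$. (Any $h=c/n$ with $c>\log 2$ works; $c=2$ already gives the stated constant, and $c$ near $2.5$ is essentially optimal.) There is no genuine obstacle in this deduction: the only things to get right are the bookkeeping of which events are $\omega_p$-measurable and the numerical constant, so I expect the entire proof to be a short routine computation.
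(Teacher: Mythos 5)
Your proof is correct and follows essentially the same route as the paper: tune the ghost intensity to $h=c/n$, lower bound $\bP_{p,h}(\sT_e)$ by a constant multiple of $\bP_p(\sS_{e,n})$, and invoke \cref{thm:twoghostunimod}. The only (immaterial) difference is that the paper handles the joint event that both clusters touch a green edge via positive association of $\cG_h$, giving the factor $(1-e^{-hn})^2$ and then optimizing over $c$, whereas you use a union bound with the fixed choice $c=2$; both yield a constant below $66$.
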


\begin{remark}
The factor of $d$ on the right of \eqref{eq:twoghostunimod} and \eqref{eq:twobodyunimod} can be replaced by the reciprocal of the probability that an edge chosen uniformly at random from those incident to $o$ is in the same orbit under $\Aut(G)$ as $e$, and in particular can be removed entirely on edge-transitive graphs such as $\Z^d$. Further improvements to and variations on these bound are discussed in  \cref{remark:improvedtwoghost}.
\end{remark}

% We now state variations on \cref{thm:twoghostunimod,cor:twoarmunimod} that apply to \emph{nonunimodular} transitive graphs. Note that the bounds we obtain in this case are substantially weaker.

% \begin{theorem}
% \label{thm:twoghostnonunimod}
%  Let $G$ be a nonunimodular transitive graph. Then 
% \[
% \P_{p,h}\bigl(\sT_{e}\bigr) \leq 20\sqrt{p(1-p)}h^{1/12}
% % 4(\sqrt{2}+1) \sqrt{\frac{1-p}{p}h}+\frac{1-p}{p}h
% \]
% for every $e\in E$, $p\in (0,1]$ and $h>0$.
% \end{theorem}

% \begin{corollary}
% \label{cor:twoarmnonunimod}
% \end{corollary}

\section{Unimodularity, nonunimodularity, and the mass-transport principle}
\label{section:unimodbackground}

We now briefly review the notions of unimodularity, nonunimodularity, and the mass-transport principle, referring the reader to e.g.\ \cite[Chapter 8]{LP:book} for further background.

Let $G=(V,E)$ be a transitive graph, and let $\Aut(G)$ be the group of automorphisms of $G$. We define the \textbf{modular function} $\Delta=\Delta_G:V^2 \to (0,\infty)$ to be
\[
\Delta(u,v) = \frac{|\stab_v u|}{|\stab_u v|},
\]
where $\stab_v = \{\gamma \in \Aut(G): \gamma v = v\}$ is the stabilizer of $v$ in $\Aut(G)$ and $\stab_v u = \{ \gamma u : \gamma \in \stab_v\}$ is the orbit of $u$ under $\stab_v$. 
We say that $G$ is \textbf{unimodular} if $\Delta(u,v)=1$ for every $u,v\in V$, and \textbf{nonunimodular} otherwise. Every Cayley graph is unimodular, as is every amenable transitive graph \cite{MR1082868}.

Let $G$ be a unimodular transitive graph. The \textbf{mass-transport principle} states that if $F:V^2\to [0,\infty]$ is diagonally-invariant in the sense that $F(\gamma u, \gamma v)=F(u,v)$ for every $u,v \in V$, then
\begin{equation}
\label{eq:MTP}
\sum_{v\in V} F(o,v) = \sum_{v\in V} F(v,o).
\end{equation}
% Similarly, if $G$ is a \emph{nonunimodular} transitive graph then it obeys the \textbf{tilted mass-transport principle}, which states that 
% if $F:V^2\to [0,\infty]$ is invariant under the diagonal action of $\Aut(G)$  then
% \begin{equation}
% \label{eq:tiltedMTP}
% \sum_{v\in V} F(o,v) = \sum_{v\in V} F(v,o)\Delta(o,v).
% \end{equation}
It will be convenient for us to use the following variation on the mass-transport principle. 
Given a graph $G=(V,E)$, write $E^\rightarrow$ for the set of oriented edges of $G$, where an oriented edge $e$ is oriented from its tail $e^-$ to its head $e^+$.
Let $G$ be a transitive graph, let $o$ be an arbitrarily chosen root vertex of $G$, and let $\eta$ be chosen uniformly at random from the set of oriented edges of $G$ emanating from $o$. Then for every $F:E^\rightarrow \times E^\rightarrow \to [0,\infty]$ that is diagonally-invariant in the sense that $F(\gamma e_1, \gamma e_2)=F(e_1,e_2)$ for every $e_1,e_2\in E^\rightarrow$ and $\gamma \in \Aut(G)$, we have that
\begin{equation}
\E\sum_{e\in E^\rightarrow} F(\eta,e) = \E\sum_{e\in E^\rightarrow} F(e,\eta),
\label{eq:edgeMTP}
\end{equation}
where the expectation is taken over the random root edge $\eta$. 
% Similarly, if $G$ is nonunimodular then 
% \begin{equation}
% \label{eq:edgetiltedMTP}
% \E\sum_{e\in E^\rightarrow} F(\eta,e) = \E\sum_{e\in E^\rightarrow} F(\eta,e)\Delta(\eta,e)
% \end{equation}
% for every diagonally-invariant $F:E^\rightarrow \times E^\rightarrow \to [0,\infty]$, where we set $\Delta(e_1,e_2):=\Delta(e_1^-,e_2^-)$.
This
 equality is 
 % equalities \eqref{eq:edgeMTP} and \eqref{eq:edgetiltedMTP} are 
 easily seen to follow by applying \eqref{eq:MTP} 
 % and \eqref{eq:tiltedMTP} respectively 
 to the function $\tilde F : V^2 \to [0,\infty]$ defined by setting $\tilde F(u,v) = \sum_{e_1^-=u} \sum_{e_2^-=v} F(e_1,e_2)$ for each $u,v\in V$. The equality \eqref{eq:edgeMTP} also holds for signed diagonally-invariant functions $F:E^\rightarrow\times E^\rightarrow \to\R$ satisfying the integrability condition
\begin{equation}
\label{eq:integrability}
\E\sum_{e\in E^\rightarrow}|F(\eta,e)|<\infty.
\end{equation}
This can be seen by applying \eqref{eq:edgeMTP} separately to the positive and negative parts of $F$, defined by
 $F^+(e_1,e_2)=0\vee F(e_1,e_2)$ and $F^-(e_1,e_2)=  0\vee (-F(e_1,e_2))$.

\begin{remark}
The formulation of the mass-transport principle given in \eqref{eq:edgeMTP} holds more generally for \emph{reversible random rooted graphs}. Such a random rooted graph can be obtained from a \emph{unimodular random rooted graph} of finite expected degree by biasing by the degree. See e.g.\ \cite{AL07,CurienNotes} for background on these notions.
\end{remark}

\section{Proof of the two-ghost inequality}

In this section we prove \cref{thm:twoghostunimod,cor:twoarmunimod}.
Given a graph $G$, $p\in [0,1]$, and a finite subgraph $H$ of $G$, we write 
\[h_p(H)= p |\partial H| - (1-p)|E_\circ(H)|,\]
where $\partial H$ denotes the set of (unoriented) edges of $G$ that touch the vertex set of $H$ but are not included in $H$, and $E_\circ(H)$ denotes the set of (unoriented) edges of $G$ that are included in $H$. We also write $E(H)$ for the set of (unoriented) edges of $G$ that touch the vertex set of $H$.

% For the remainder of the section, we fix a 
Let $G$ be a connected, locally finite graph $G$, and let $o$ be a vertex of $G$. 
We write $\P_{p,h}$ and $\E_{p,h}$ for probabilities and expectations taken with respect to the law of the percolation configuration $\omega_p$, the independent ghost field $\cG_h$, and the independent choice of a uniformly random oriented edge $\eta$ emanating from $o$. 
% We write $\P$ and $\E$ for probabilities and expectations taken with respect only to the random choice of $\eta$.

\begin{lemma}
\label{lem:AKN} Let $G$ be a connected, locally finite, unimodular transitive graph. Then the inequality
\begin{align*}
\P_{p,h}(\sT_\eta) \leq \frac{2}{p}\bE_{p,h}\left[\frac{|h_p(K_{o})|}{|E(K_{o})|} \mathbbm{1}\bigl(|K_{o}| < \infty \text{ and } E(K_{o}) \cap \cG_h \neq \emptyset \bigr)\right]
\end{align*}
holds for every $p\in (0,1]$ and $h>0$.
\end{lemma}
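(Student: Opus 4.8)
The plan is to prove this via the mass-transport principle in the edge form \eqref{eq:edgeMTP}, following the summation-exchange philosophy of Aizenman--Kesten--Newman. On the event $\sT_\eta$, the edge $\eta$ is closed and its two endpoints $\eta^-, \eta^+$ lie in distinct clusters $K_{\eta^-}, K_{\eta^+}$, each of which touches a green edge, with at least one of them finite; say WLOG we focus on a finite cluster among the two. The idea is to "charge" the event $\sT_\eta$ to one of these finite clusters, distributing a unit of mass from $\eta$ across the edges touching that cluster, and then to transport that mass back. Concretely, I would define a diagonally-invariant mass-transport $F(e_1, e_2)$ that sends mass from $e_1$ to each edge $e_2 \in E(K)$ where $K$ is a finite cluster adjacent to $e_1$ in the configuration on the event analogous to $\sT$ for the oriented edge $e_1$ (with appropriate symmetrization over which endpoint's cluster, and over orientations), weighted by $1/|E(K)|$ so that the total mass sent out of $e_1$ is at most a bounded constant (accounting for the at-most-two relevant clusters and the two orientations, giving the factor that will become the $2$).

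First I would set up the mass-transport function carefully so that the left-hand side, $\sum_{e_2} \E[F(\eta, e_2)]$, is at least $\P_{p,h}(\sT_\eta)$ — here each occurrence of $\sT$ for the root edge contributes total outgoing mass $1$ (or a controlled multiple), since the weights $1/|E(K)|$ over $e_2 \in E(K)$ sum to $1$. Then the mass-transport principle converts this to $\sum_{e_1} \E[F(e_1, \eta)]$: the mass received at $\eta$ comes from those oriented edges $e_1$ that are closed, have an endpoint in a finite cluster $K$ with $\eta \in E(K)$, etc. The key combinatorial point is that for a fixed finite cluster $K$ with $\eta \in E(K)$, the number of closed oriented edges $e_1$ that "point into" $K$ in the relevant sense — i.e. closed edges with exactly one endpoint in $K$, oriented toward $K$ — is exactly $|\partial K|$ (up to the orientation/endpoint bookkeeping), while one also needs the green-edge condition and the requirement that the cluster on the other side of $e_1$ is nonempty/infinite as appropriate. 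This is where the quantity $h_p(K) = p|\partial K| - (1-p)|E_\circ(K)|$ should emerge: conditioning on the cluster $K$ (a finite connected subgraph), the boundary edges $\partial K$ are independently closed with probability $1-p$ and the internal edges $E_\circ(K)$ open with probability $p$, so the *signed* count $p|\partial K| - (1-p)|E_\circ(K)|$ arises by comparing the mass actually received (from closed boundary edges) to a compensating term, and one then bounds things by $|h_p(K)|$ after taking absolute values. The division by $|E(K)|$ persists from the weighting, and the indicator $\mathbbm{1}(|K_o| < \infty,\ E(K_o) \cap \cG_h \neq \emptyset)$ records exactly that $K_o$ is one of the finite, green-touching clusters being charged.

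The main obstacle I anticipate is getting the bookkeeping exactly right so that the received mass is genuinely controlled by $\frac{1}{p}\E[|h_p(K_o)| / |E(K_o)| \cdot \mathbbm{1}(\cdots)]$ with the stated constant $2$ — in particular, correctly handling: (i) the two orientations of each unoriented edge and the two endpoints of $\eta$ (which of $K_{\eta^-}, K_{\eta^+}$ is the finite one, and the possibility that both are finite, which is why a symmetrized definition charging *a* finite green-touching cluster, with multiplicity at most $2$, is cleaner); (ii) the factor $1/p$, which should come from the fact that $\eta$ being a *closed* boundary edge of $K$ has probability $(1-p)$ relative to being present, whereas the natural mass-transport counts edges weighted differently — one inserts a $1/p$ to pass from "closed boundary edges whose other cluster touches green" to the $p|\partial K|$ term, balancing against the internal open edges giving $(1-p)|E_\circ(K)|$; and (iii) ensuring diagonal-invariance and the integrability condition \eqref{eq:integrability} so that the signed mass-transport principle applies — the latter should follow since on the relevant event $|K_o| < \infty$ and $|h_p(K_o)| \leq |E(K_o)| \leq |K_o| \cdot d$, making the summand bounded. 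Once the transport identity is in hand, taking absolute values inside and using $|E(K)| \geq 1$ gives the claimed inequality; the green-field indicator on the receiving side is forced because $F(e_1, e_2)$ was defined to vanish unless $E(K) \cap \cG_h \neq \emptyset$.
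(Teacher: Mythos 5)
Your overall skeleton (mass transport over oriented edges via \eqref{eq:edgeMTP}, weights $1/|E(K)|$ over the edges touching a finite green-touching cluster, a factor $2$ from the two endpoint clusters and the two orientations, transitivity at the end) is the same as the paper's, but the step that actually produces $h_p(K)$ is missing, and the mechanism you propose for it would not work. Your transport sends out nonnegative mass, namely $\mathbbm{1}(\sT_{e_1})$ spread over $E(K)$; transporting that back, the mass received at $\eta$ from a fixed finite cluster $K$ is a count of \emph{closed boundary} edges of $K$ (with side conditions), so at best you get a bound with $|\partial K_o|/|E(K_o)|$ on the right-hand side, not $|h_p(K_o)|/(p|E(K_o)|)$. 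There is no way to pass from the former to the latter by ``conditioning on the cluster $K$'': once $K$ is a cluster of $\omega$, every edge of $\partial K$ is closed and every edge of $E_\circ(K)$ is open deterministically, so no signed cancellation $p|\partial K|-(1-p)|E_\circ(K)|$ can emerge on the receiving side from a nonnegative transport. Since $|h_p(K)|$ is typically of much smaller order than $p|\partial K|$ (this smallness is the entire point of the lemma, exploited later via a martingale argument), the inequality you would obtain is both different from the statement and too weak for the application.

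The missing idea is that the transport must itself be \emph{signed}, and that its expected outgoing mass equals $\P_{p,h}(\sT_\eta)$ only because of a prior insertion-tolerance identity. In the paper, each oriented edge $e_1$ sends, to every oriented edge touching each finite cluster $K$ of $\omega$ that touches $e_1$ and $\cG_h$, the amount $\bigl(\mathbbm{1}(\omega(e_1)=0)-\frac{1-p}{p}\mathbbm{1}(\omega(e_1)=1)\bigr)/(2|E(K)|)$. On the receiving side the signed sum over the $2|E(K)|$ oriented edges touching a fixed finite green-touching cluster $K$ is then \emph{identically} $\bigl(|\partial K|-\frac{1-p}{p}|E_\circ(K)|\bigr)/|E(K)|=\frac{1}{p}h_p(K)/|E(K)|$, which is where both $h_p$ and the factor $1/p$ come from. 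On the sending side, one needs the exact identity
\begin{equation*}
\bP_{p,h}\bigl(\{\omega(e)=0\}\cap\sF_e\cap\sG_e\bigr)=\frac{1-p}{p}\,\bP_{p,h}\bigl(\{\omega(e)=1\}\cap\sG_e\bigr),
\end{equation*}
valid because the event $\sF_e\cap\sG_e$ (every cluster touching $e$ is finite, and some finite cluster touching $e$ touches a green edge) is independent of $\omega(e)$; combined with the decomposition of $\sT_e$ into the event that both endpoint clusters are finite and green-touching together with the event $\{\omega(e)=0\}\cap\sG_e\setminus\sF_e$, this shows that the expected total outgoing mass equals $\bP_{p,h}(\sT_e)$ exactly, despite the negative contributions from open edges. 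Without this identity and the signed structure, your ``compensating term'' and the appearance of $h_p$ are unsubstantiated, so the proposal as written has a genuine gap at the heart of the argument (the surrounding bookkeeping, integrability check for the signed mass-transport principle, and the final passage to the root vertex are fine).
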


Note that for every edge $e$ of $G$, we have that the unoriented edge obtained by forgetting the orientation of $\eta$ has probability at least $1/\deg(o)$ to be in the same $\Aut(G)$ orbit as $e$. Thus, \cref{lem:AKN} immediately implies that if $G$ is unimodular then
\begin{align}
\label{eq:twoghostdegree}
\bP_{p,h}(\sT_e) \leq \frac{2 \deg(o)}{p}\bE_{p,h}\left[\frac{|h_p(K_{o})|}{|E(K_{o})|} \mathbbm{1}\bigl(|K_{o}| < \infty \text{ and } E(K_{o}) \cap \cG_h \neq \emptyset \bigr)\right]
\end{align}
for every $e\in E$, $p\in (0,1]$ and $h>0$.

\begin{proof}[Proof of \cref{lem:AKN}]
Let $\sT_e'$ be the event that the endpoints of $e$ are in distinct \emph{finite} clusters each of which touches $\cG_h$. 
Let $\sG_e$ be the event that there exists a finite cluster touching $e$ and $\cG_h$. Observe that for each edge $e$ of $G$ we have the equality 
\begin{equation*}
\mathbbm{1}(\sT_e') = \mathbbm{1}(\omega(e)=0)\cdot \#\{\text{finite clusters touching $e$ and $\cG_h$}\}
\\- \mathbbm{1}\bigl(\{\omega(e)=0\}\cap \sG_e), 
\end{equation*}
and hence
\begin{multline}
\label{eq:unimodghost1}
\bP_{p,h}(\sT_e') = \bE_{p,h}\left[\mathbbm{1}(\omega(e)=0)\cdot\#\{\text{finite clusters touching $e$ and $\cG_h$}\}\right]
\\- \bP_{p,h}\bigl(\{\omega(e)=0\} \cap \sG_e\bigr). 
\end{multline}
Let $\sF_e$ be the event that every cluster touching $e$ is finite. Observe that the event $\sF_e \cap \sG_e$ is independent of the value of $\omega(e)$, and consequently that
% If $\omega(e)=1$ and $e$ belongs to a finite cluster touching $\cG_h$, then if we modify the configuration by turning $e$ off then $e$ still touches a finite cluster touching $e$ and $\cG_h$. Thus, we obtain the inequality
\begin{multline}
\bP_{p,h}\bigl(\{\omega(e)=0\} \cap \sF_e \cap \sG_e\bigr)
% = \bP_{p,h}\bigl(\{\omega(e)=0\} \cap \{\text{there exists a finite cluster touching $e$ and $\cG_h$}\}\bigr)
= \frac{1-p}{p} \bP_{p,h}\bigl(\{\omega(e)=1\} \cap \sF_e \cap \sG_e \bigr).
\\
= \frac{1-p}{p} \bP_{p,h}\bigl(\{\omega(e)=1\}  \cap \sG_e\bigr).
\label{eq:unimodghost2}
\end{multline}
% (If there are no infinite clusters a.s.\ then this inequality is an equality. In general it is an inequality as the left-hand side includes an additional term in which there is an infinite cluster touching the edge $e$.) 
Combining \eqref{eq:unimodghost1} and \eqref{eq:unimodghost2} yields that
\begin{multline}
\label{eq:AKNmainstep}
\bP_{p,h}(\sT_e') = \bE_{p,h}\left[\mathbbm{1}(\omega(e)=0)\cdot \#\{\text{finite clusters touching $e$ and  $\cG_h$}\}\right]
\\- \frac{1-p}{p}\bP_{p,h}(\{\omega(e)=1\} \cap \sG_e) - \bP_{p,h}\bigl(\{\omega(e)=0\} \cap \sG_e \setminus \sF_e\bigr).
\end{multline}
Finally, observe that $\{\omega(e)=0\} \cap \sG_e \setminus \sF_e$ and $\sT_e'$ are disjoint and that the events $\sT_e$ and $\sT_e' \cup (\{\omega(e)=0\} \cap \sG_e \setminus \sF_e)$ coincide up to a null set, so that \eqref{eq:AKNmainstep} implies that
\begin{multline}
\label{eq:AKNmainstep2}
\bP_{p,h}(\sT_e) = \bE_{p,h}\left[\mathbbm{1}(\omega(e)=0)\cdot \#\{\text{finite clusters touching $e$ and  $\cG_h$}\}\right]
\\- \frac{1-p}{p}\bP_{p,h}(\{\omega(e)=1\} \cap \sG_e).
\end{multline}

We will now apply the assumption that $G$ is unimodular. 
Define a mass-transport function $F:E^\rightarrow\times E^\rightarrow \to \R$ by
\begin{equation*}
F(e_1,e_2) = \bE_{p,h}\sum\left\{\frac{\mathbbm{1}(\omega(e_1)=0) - \frac{1-p}{p}\mathbbm{1}(\omega(e_1)=1)}{2|E(K)|} : \begin{array}{l}\text{$K$ is a finite cluster of $\omega$}\\ \text{touching $e_1,e_2$, and $\cG_h$}\end{array}\right\},
\end{equation*}
where we write $\sum\{x(i) :i\in I\} = \sum_{i\in I} x(i)$. The factor of $2$ accounts for the fact that each edge in $E(K)$ can be oriented in two directions. 
Note that the multiset of numbers being summed over has size either $0,1,$ or $2$. Thus, we easily verify the integrability condition
$\E\sum_{e\in E^\rightarrow} |F(\eta,e)| \leq 2\left[ 1 + (1-p)/p\right]<\infty$,
and applying the mass-transport principle \eqref{eq:edgeMTP} we obtain that
\begin{multline}
\label{eq:AKN_MTP}
\P_{p,h}(\sT_\eta) = \E \sum_{e\in E^\rightarrow}F(\eta,e) = \E \sum_{e\in E^\rightarrow}F(e,\eta)   \\=
\frac{1}{p}\E_{p,h} \sum\left\{\frac{h_p(K)}{|E(K)|} : \begin{array}{l}\text{$K$ is a finite cluster of $\omega$}\\ \text{touching $\eta$ and $\cG_h$}\end{array}\right\},
\end{multline}
where the first equality follows from \eqref{eq:AKNmainstep2}. 
For each vertex $v$ of $G$, define 
$\sO_v$ to be the event that $K_v$ is finite and touches $\cG_h$. 
We deduce immediately from \eqref{eq:AKN_MTP} that
\begin{multline*}\P_{p,h}(\sT_\eta) \leq \frac{1}{p}\E_{p,h}\sum\left\{\frac{|h_p(K)|}{|E(K)|} : \begin{array}{l}\text{$K$ is a finite cluster of $\omega$}\\ \text{touching $\eta$ and $\cG_h$}\end{array}\right\}\\
\leq\frac{1}{p}\E_{p,h}\left[\frac{|h_p(K_{\eta^-})|}{|E(K_{\eta^-})|}\mathbbm{1}\bigl(\sO_{\eta^-} \bigr)+ \frac{|h_p(K_{\eta^+})|}{|E(K_{\eta^+})|}\mathbbm{1}\bigl(\sO_{\eta^+} \bigr)\right]
=\frac{2}{p}\bE_{p,h}\left[\frac{|h_p(K_{o})|}{|E(K_{o})|}\mathbbm{1}\bigl(\sO_{o} \bigr)\right]
\end{multline*}
as claimed, where we have used transitivity in the final equality.
\end{proof}

\begin{proof}[Proof of \cref{thm:twoghostunimod}]
In light of \cref{lem:AKN} and the estimate \eqref{eq:twoghostdegree} that follows from it, it suffices to prove that for every connected, locally finite graph $G$ and every vertex $v$ of $G$, we have that
\begin{align*}
\bE_{p,h}\left[\frac{|h_p(K_{v})|}{|E(K_{v})|} \mathbbm{1}\bigl(|K_{v}| < \infty,\, E(K_{v}) \cap \cG_h \neq \emptyset \bigr)\right] &= \bE_{p}\left[\frac{|h_p(K_{v})|}{|E(K_{v})|} (1-e^{-h |E(K_v)|}) \mathbbm{1}\bigl(|K_v|<\infty\bigr)\right]\\ &\leq \frac{33}{2} \sqrt{p(1-p) h}.
\end{align*}
% Consider the following exploration procedure for sampling the cluster of $\rho^-$.

Consider the following procedure for exploring the cluster of $v$. 
Fix a well-ordering $\opleq$ of the edges of $G$. 
At each stage of the exploration we will have a set of vertices $U_n$, a set of revealed open edges $O_n$, and a set of revealed closed edges $C_n$. 
We begin by setting $U_0=\{v\}$ and  $C_0=O_0=\emptyset$. Let $n\geq 1$. Given what has happened up to and including step $n-1$ of the exploration, we define $(U_n,O_n,C_n)$ as follows: If every edge touching $U_{n-1}$ is included in $O_{n-1}\cup C_{n-1}$, we set $(U_n,O_n,C_n)=(U_{n-1},O_{n-1},C_{n-1})$. Otherwise, we take $E_n$ to be the $\opleq$-minimal element of the set of edges that touch $U_{n-1}$ but are not in $O_{n-1}$ or $C_{n-1}$. If $\omega_p(E_n)=1$, we set $O_n=O_{n-1}\cup\{E_n\}$, $C_n=C_{n-1}$, and set $U_{n}$ to be the union of $U_n$ with the set of endpoints of $E_n$. Otherwise, $\omega_p(E_n)=0$ and we set $O_n=O_{n-1}$, $C_n =C_{n-1} \cup \{E_n\}$, and $U_n = U_{n-1}$. Let $(\cF_n)_{n\geq 0}$ be the filtration generated by this exploration process. 

Let $T$ be the first time $n$ that every edge touching $U_n$ is included in $O_n \cup C_n$, setting $T=\infty$ if this never occurs.
It is easily verified that $(U_T,O_T,C_T,T)$ is equal to $(K_v,E_\circ(K_v),\partial K_v,|E(K_v)|)$. 
Let $(Z_n)_{n\geq 0}$ be defined by $Z_0=0$ and
\[Z_n = \sum_{i=1}^{n\wedge T} \left[(1-p)\mathbbm{1}(\omega_p(E_i)=1) - p \mathbbm{1}(\omega_p(E_i)=0)\right].\]
Then we have that $Z_n=(1-p)|O_n| -p |C_n|$ for every $n\leq T$, and consequently that 
\begin{equation}
\label{eq:converttoMTG}
\bE_{p}\left[\frac{|h_p(K_{v})|}{|E(K_{v})|} (1-e^{-h |E(K_v)|}) \mathbbm{1}\bigl(|K_v|<\infty\bigr)\right] = \bE_p\left[ \frac{|Z_T|}{T}\bigl(1-e^{-hT}\bigr)\mathbbm{1}(T<\infty) \right].
\end{equation}
% Thus, it suffices to show that
% \[\bE_p\left[ \frac{|Z_T|}{T}\bigl(1-e^{-hT}\bigr)\mathbbm{1}(T<\infty) \right]\leq \frac{41}{2}\sqrt{p(1-p)h}\]
% for every $h>0$ and $p\in (0,1)$.
% 
We will control this expectation using the elementary bound
\begin{multline}
  \bE_p\left[ \frac{|Z_T|}{T}\bigl(1-e^{-hT}\bigr)\mathbbm{1}(T<\infty) \right]
% \leq   
% \bE_p\left[ \max_{n\geq 1} \frac{|Z_n|}{n}\bigl(1-e^{-hn}\bigr) \right]\\ 
\leq \sum_{k\geq 0} \frac{1-e^{-h2^{k}}}{2^k} \bE_p\left[ \max_{2^k \leq n \leq 2^{k+1}} |Z_n| \mathbbm{1}(2^k \leq T \leq 2^{k+1})\right],
\label{eq:scales}
\end{multline}
where we used the fact that $(1-e^{-hx})/x$ is a decreasing function of $x>0$. 
 The process $(Z_n)_{n\geq 0}$ is a martingale with respect to $(\cF_n)_{n\geq 1}$ and by orthogonality of martingale increments has \[\bE_p Z_n^2 = \sum_{i=1}^n \bE_p (Z_i-Z_{i-1})^2 = p(1-p)\sum_{i=1}^n \bP_p(T\geq i) = p(1-p) \bE_p[ T \wedge n] \leq p(1-p)n\] for every $n\geq 0$.
Thus, we may apply Doob's $L^2$ maximal inequality to deduce that
\[
\bE_p\left[ \max_{2^k \leq n \leq 2^{k+1}} Z_n^2 \right] \leq \bE_p\left[ \max_{0 \leq n \leq 2^{k+1}} Z_n^2 \right] \leq 4 \bE_p \left[ Z_{2^{k+1}}^2\right] \leq 8p(1-p) 2^{k},
\]
and hence by \eqref{eq:scales} that
\begin{align*}
  \bE_p\left[ \frac{|Z_T|}{T}\bigl(1-e^{-hT}\bigr)\mathbbm{1}(T<\infty) \right]
\leq \sqrt{8p(1-p)}\sum_{k\geq 0} \frac{1-e^{-h2^{k}}}{2^k} 2^{k/2}.
\end{align*}
Next, we use the bound $1-e^{-h2^{k}} \leq h2^{k}$ when $k\leq \lfloor \log_2 (1/h) \rfloor$ and the bound $1-e^{-h2^{k}} \leq 1$ when $k\geq \lfloor \log_2 (1/h) \rfloor+1$ to obtain that
\begin{align*}
\bE_p\left[ \frac{|Z_T|}{T}\bigl(1-e^{-hT}\bigr)\mathbbm{1}(T<\infty) \right] 
&\leq \sqrt{8p(1-p)}\left[\sum_{k= 0}^{\lfloor \log_2 (1/h) \rfloor} h 2^{k/2} + \sum_{k \geq \lfloor \log_2 (1/h)\rfloor +1} 2^{-k/2} \right]\\
&\leq \frac{(\sqrt{2}+1)\sqrt{8p(1-p)h}}{\sqrt{2}-1} \leq \frac{33}{2} \sqrt{p(1-p)h}, 
\end{align*}
where we used the bound $(\sqrt{2}+1) \sqrt{8} / (\sqrt{2} -1) =16.485\ldots \leq 33/2$ to simplify the expression. In light of \eqref{eq:converttoMTG}, the proof is completed.
\end{proof}

\begin{proof}[Proof of \cref{cor:twoarmunimod}]
Let $\sD_e$ be the event that $e^+$ and $e^-$ are in distinct clusters at least one of which is finite. 
By positive association of the Bernoulli process $\cG_h$, we have that
\begin{multline*}
\mathbf{P}_{p,h}(\sT_e) \geq \bE_{p}\left[\Bigl(1-e^{-h|E(K_{e^-})|}\Bigr)\Bigl(1-e^{-h|E(K_{e^+})|}\Bigr)\mathbbm{1}\bigl(\sD_e\bigr) \right] \\\geq (1-e^{-hn})^2\bP_{p}\bigl(\sD_e \cap \{|E(K_{e^-})|,|E(K_{e^+})| \geq n\}\bigr) =   (1-e^{-hn})^2\bP_{p}\bigl(\sS_{e,n}\bigr)
\end{multline*}
(Note that the first inequality is not an equality since $E(K_{e^-}) \cap E(K_{e^+})  \neq \emptyset$.) Applying \cref{thm:twoghostunimod}, it follows that if we set $h=c/n$ for $c>0$ then
\[
\bP_p(\sS_{e,n}) \leq (1-e^{-hn})^{-2}\bP_{p,h}(\sT_e)  \leq   33 c^{1/2} (1-e^{-c})^{-2} \deg(o) \left[\frac{1-p}{pn}\right]^{1/2}
\]
for every $n\geq 1$ and $c>0$.
The claim follows since $\inf_{c>0} c^{1/2} (1-e^{-c})^{-2} =1.873\ldots \leq 2$.
\end{proof}

\section{Proof of \cref{thm:polybound}}

% \begin{theorem} Let $G$ be a quasi-transitive graph. Then
% $\kappa_p(n)\leq \operatorname{gr}(G)^{-n}$ for every $n\geq 1$ and $0\leq p\leq p_c$.
% \end{theorem}

We now apply \cref{cor:twoarmunimod} and the two-point function bound \eqref{eq:kappabound} to prove \cref{thm:polybound}.

\begin{proof}[Proof of \cref{thm:polybound}]
Let $G$ be a connected, locally finite graph, and let $0<p<p_c(G)$. Let $u,v$ be vertices of $G$, and let $\gamma$ be a simple path of length $k$ in $G$ starting at $u$ and ending at $v$. Let $\gamma_i $ denote the $i$th  edge that is traversed by $\gamma$. Let $\omega$ be Bernoulli bond percolation on $G$. For each $0\leq i \leq k$, let $\omega^i$ be obtained from $\omega$ by setting 
\[\omega^i(e) =\begin{cases} 1 & e \in \{\gamma_j : 1\leq j  \leq i\}\\
\omega(e) & e \notin \{\gamma_j : 1\leq j  \leq i\}.
\end{cases}
\]
Let $\sA_n(u,v)$ be the event that $u$ and $v$ are in distinct clusters of $\omega$ each of which touches at least $n$ edges. For each $1\leq i \leq k$, let $\sB_{n,i}(u,v,\gamma)$ that $u$ and $v$ are in distinct clusters of $\omega^{i-1}$ each of which touches at least $n$ edges and that $u$ and $v$ are connected in $\omega^i$. Since $u,v$ are clearly connected in $\omega^k$ and not connected in $\omega^0$ on the event $\sA_n(u,v)$, 
% since if $u,v$ are connected in $\omega^i$ for $1\leq i \leq k-1$ then they are also connected in $\omega^{i+1}$,
 and since the clusters of $u$ and $v$ are larger in $\omega^i$ than in $\omega^0$, we have that 
\begin{equation}
\label{eq:ABsetinclusion}
\sA_{n}(u,v) \subseteq \bigcup_{i=1}^k \sB_{n,i}(u,v,\gamma).
\end{equation}
% Thus, there must exist $1\leq i_0 \leq k$ such that
% \[
% \bP_p\bigl(\sB_{n,i_0}(u,v,\gamma)\bigr) \geq \frac{1}{k}\bP_p\bigl(\sA_n(u,v)\bigr).
% \]
Now, for each $1\leq i \leq k$ and $n\geq 1$ let $\sC_{n,i}(\gamma)$ be the event that the endpoints of $\gamma_i$ are in distinct clusters of $\omega$ each of which touches at least $n$ edges. Observe that $\sC_{n,i}(\gamma) \supseteq \sB_{n,i}(u,v,\gamma) \cap \{\omega(\gamma_j)=1 \text{ for every $1\leq j \leq i-1$}\}$. Since these two events are independent, we deduce that
\[
\bP_p\bigl(\sC_{n,i}(\gamma)) \geq p^{i-1} \bP_p\bigl( \sB_{n,i}(u,v,\gamma)),
% \geq \frac{p^{k-1}}{k} \bP_p\bigl(\sA_n(u,v)\bigr).
\]
and hence by \eqref{eq:ABsetinclusion} that
\begin{equation*}
% \text{ and hence that }
\bP_p(\sA_n(u,v)) \leq \sum_{i=1}^k p^{-i+1} \bP_p\bigl(\sC_{n,i}(\gamma))
\end{equation*}
On the other hand, since $p<p_c(G)$ we  have that $
\bP_p\bigl(\sC_{n,i}(\gamma) \bigr) \leq \sup_{e\in E} \bP_p\bigl(\sS_{e,n}\bigr)$ for every $1\leq i \leq k$, and it follows that
\begin{equation}
p^k\bP_p\bigl(\sA_n(u,v)\bigr) \leq \sum_{i=1}^k p^{k-i+1} \sup_{e\in E} \bP_p\bigl(\sS_{e,n}\bigr) \leq \frac{p}{(1-p)} \sup_{e\in E} \bP_p\bigl(\sS_{e,n}\bigr)
\end{equation}
for every $0<p<p_c(G)$ and every $u,v\in V$ with $d(u,v)\leq k$.

% Now suppose that $G$ is a transitive unimodular graph of degree $d$ and that $p<p_c$. Then we deduce from \cref{thm:twoghostunimod} that
% \[\frac{p^{k-1}}{k}\bP_p(\sA_n(u,v))\leq \bP_p\bigl(\sC_{n,i_0}(u,v,\gamma)\bigr) \leq 30  d\sqrt{\frac{(1-p)}{pn}} \]
Now suppose that $G$ is transitive, unimodular, and has exponential volume growth. Let $P_p(n)=\bP_p(|E(K_v)|\geq n)$, which does not depend on the choice of $v$ by transitivity, and let $Q_p(n)=\sup_{e\in E} \bP_p(\sS_{e,n})$. For each $u,v\in V$ we have the bound 
\[\bP_p(\sA_n(u,v)) \geq \bP_p(|K_u| \geq n,|K_v|\geq n) - \tau_p(u,v) \geq P_p(n)^2 - \tau_p(u,v),\]
where the second inequality follows by transitivity and the Harris-FKG inequality. Thus, if we take $u,v$ to be a pair of vertices with $d(u,v)\leq k$ minimizing $\tau_p(u,v)$ and take $\gamma$ to be a geodesic between $u$ and $v$, we obtain that, by \eqref{eq:kappabound},
\[
p^k \left[ P_p(n)^2 -\gr^{-k} \right]  \leq p^k \left[ P_p(n)^2 -\kappa_p(k)\right] \leq \frac{p}{1-p}Q_p(n)
\]
for every $0<p<p_c(G)$, $n\geq 1$ and $k\geq 1$.
Setting 
$k= \left\lceil \frac{-\log \frac{1}{2}P_p(n)^2}{\log \gr}\right\rceil$ gives $\gr^{-k} \leq \frac{1}{2}P_p(n)^2$ and $p^k \geq p [\frac{1}{2}P_p(n)^2]^{-\log p/\log \gr}$
and rearranging 
we obtain that
% \begin{equation}
% \label{eq:calculation}
%  \left[ \frac{1}{2}P_p(n)^2 \right]^{\alpha_p}  \leq \frac{p}{1-p} Q_p(n)
% \end{equation}
% and hence that
\begin{equation}
\label{eq:calculation}
P_p(n)  \leq \sqrt{2}\left[ \frac{1}{1-p} Q_p(n)\right]^{1/2\alpha_p}
\end{equation}
for every $0<p<p_c(G)$ and $n\geq 1$, where $\alpha_p = 1 - \log p/\log \gr$. 
% 
% We will now apply \cref{thm:twoghostunimod,thm:twoghostnonunimod} to bound $Q_p(n)$ and deduce a bound on $P_p(n)$ using \eqref{eq:calculation}. 
% From here, the remainder of the proof is calculus. 
% Using the bound
% $1-c^{-1}\log x \leq c^{-1}\eps^{-1} x^{-\eps}$,
% which holds for every $\eps,c>0$ and $x\in (0,1]$, we obtain that
% % \[
% % \left[ \frac{1}{2}P_p(n)^2\right]^{\alpha+\eps} \leq  \frac{1}{\eps \gr}Q_p(n)
% % \]
% % and hence that
% % \[
% \begin{equation}
% \label{eq:preoptimized}
% P_p(n) \leq  \sqrt{2} \left[\frac{1}{\eps \log \gr}Q_p(n)\right]^{1/2(\alpha_p+\eps)}.
% \end{equation}
% 
% From here it is very easy to conclude the proof. We will do this only in the unimodular case, the nonunimodular case being very similar.
Thus, it follows from \eqref{eq:calculation} and \cref{cor:twoarmunimod} that
\[
P_p(n)\leq \sqrt{2} \left( 66 d \left[ \frac{1}{(1-p)n} \right]^{1/2} \right)^{1/2\alpha_p}.
\] 
Since $P_p(n)$ is a continuous function of $p$ for each $n\geq 1$ (indeed, it depends on only finitely many edges and is therefore a polynomial),  we may take the limit as $p\uparrow p_c$ to obtain that
\[
P_{p_c}(n)\leq \sqrt{2} \left( 66 d \left[ \frac{{1}}{(1-{p_c})n} \right]^{1/2} \right)^{1/2\alpha_{p_c}} \leq \frac{5 d^{1/4}}{(1-\gr^{-1})^{1/8}} n^{-1/4\alpha_{p_c}} \leq \frac{5 d^{1/4}}{(1-\gr^{-1})^{1/8}} n^{-\beta}
\] 
where $\beta = 1/(4+4 \log(d-1) /\log \gr)$. We have used that $p_c \leq \gr^{-1}$ and hence that
$\alpha_{p_c} \geq 2$ in the first inequality and similarly that $p_c \geq 1/(d-1)$ and hence that 
 $\beta \leq 1/4\alpha_{p_c}$ in the final inequality.
 % (we have also used the inequality $2^{1/2}\cdot 82^{1/4} \leq 5$.) to simplify the expression). 
\end{proof}

% Since $x|1-\log x|$ is an increasing function of $x$ on $(0,\infty)$, we obtain that if $G$ is unimodular then
% \[
% P_p(n) \leq \sqrt{\frac{2 \gr}{\log \gr}} \left( 60 d \left[\frac{1-p}{pn} \right]^{1/2} \left|1-\log 60 d \left[\frac{1-p}{pn} \right]^{1/2} \right|\right)^{1/2\alpha}
% \]
% \[
% P_p(n) \leq \sqrt{\frac{2 \gr}{\log \gr}} \left[60 d \sqrt\frac{1-p}{p}\right]^{1/2\alpha}  \left[1+\frac{1}{2}\log n\right]^{1/2\alpha}n^{-1/4\alpha}
% \]

% \[
% P_{p_c}(n)\leq (1+o(1))
% \]
%
%\bibliographystyle{abbrv}
%\bibliography{biblio}

\section{Proof of \cref{thm:locality}}

In this section we prove \cref{thm:locality}. The case in which all the graphs $G$ and $(G_n)_{n\geq 1}$ are unimodular is an immediate consequence of the following corollary of \cref{thm:polybound}.
% , which is a quantitative version of the same theorem. 
Given two transitive graphs $G_1$ and $G_2$, we write $R(G_1,G_2)$ for the maximal radius $R$ such that whenever $o_1$ is a vertex of $G_1$ and $o_2$ is a vertex of $G_2$, there exists a graph isomorphism from the ball for radius $R$ around $o_1$ in $G_1$ to the ball of radius $R$ around $o_2$ in $G_2$ sending $o_1$ to $o_2$. 

\begin{corollary}
\label{cor:rateofconvergence}
For every $M<\infty$ and $g>1$, there exist positive constants $C(M,g)$ and $\delta(M,g)$ such that for every pair of  unimodular transitive graphs $G_1$ and $G_2$ with degrees at most $M$ and $\gr(G_1),\gr(G_2)\geq g$, we have that
\[
|p_c(G_1)-p_c(G_2)| \leq C R(G_1,G_2)^{-\delta}.
\]
\end{corollary}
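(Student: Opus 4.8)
The plan is to deduce the estimate directly from \cref{thm:polybound} together with the classical mean-field lower bound on the percolation probability above $p_c$ (the same ingredient that yields lower semicontinuity of $p_c$). Write $R=R(G_1,G_2)$ and fix a root vertex $o$; by definition of $R$ there is a rooted graph isomorphism $\phi$ between the balls $B(o,R)$ in $G_1$ and in $G_2$, and $\phi$ automatically preserves graph distance from the root within these balls, since geodesics from $o$ to points of $B(o,R)$ stay inside $B(o,R)$.

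I would first establish a transfer estimate. For a transitive graph $G$ and a percolation parameter $p$, let $A_R$ be the event that the cluster $K_o$ contains a vertex at distance at least $R$ from $o$. This event is measurable with respect to the states of the edges lying inside $B(o,R)$ (if $K_o$ reaches distance $R$ then it does so along a path contained in $B(o,R)$), and $A_R\subseteq\{|E(K_o)|\geq R\}$ because a simple path from $o$ to a vertex at distance $R$ consists of at least $R$ open edges, all of which touch $K_o$. Hence, for every $p<p_c(G_1)$, \cref{thm:polybound} applied to $G_1$ gives $\bP^{G_1}_p(A_R)\leq\bP^{G_1}_p(|E(K_o)|\geq R)\leq CR^{-\delta}$, with $C=C(M,g)$ and $\delta=\delta(M,g)$ the constants supplied by that theorem. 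Since $A_R$ is determined by $B(o,R)$, which is the same rooted graph in $G_1$ and in $G_2$ (via $\phi$), I conclude that $\bP^{G_2}_p(A_R)\leq CR^{-\delta}$ for every $p<p_c(G_1)$ as well.

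Then I would feed this into the mean-field lower bound. Assume without loss of generality that $p_c(G_2)\leq p_c(G_1)$ (otherwise exchange the roles of the two graphs; $G_2$ also satisfies the hypotheses of \cref{thm:polybound}). For any $q$ with $p_c(G_2)<q<p_c(G_1)$, the mean-field lower bound \cite{aizenman1987sharpness,duminil2015new} gives $\bP^{G_2}_q(|K_o|=\infty)\geq(q-p_c(G_2))/(q(1-p_c(G_2)))\geq q-p_c(G_2)$, and $\{|K_o|=\infty\}\subseteq A_R$. Combining with the transfer estimate yields $q-p_c(G_2)\leq CR^{-\delta}$ for all such $q$, and letting $q\uparrow p_c(G_1)$ gives $p_c(G_1)-p_c(G_2)\leq CR^{-\delta}$, as desired (with the same $C$ and $\delta$).

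I do not expect a genuinely hard step: all the difficulty is already packed into \cref{thm:polybound}, which in turn rests on the two-ghost inequality and the two-point bound \eqref{eq:kappabound}. The real content of the corollary is simply that the constants in \cref{thm:polybound} depend only on $M$ and $g$, so that the same bound applies simultaneously to $G_1$ and to $G_2$. The only points that need a moment's care are the two elementary observations in the transfer step: that the one-arm event $A_R$ is determined by the ball of radius $R$, and that the ball isomorphism $\phi$ respects distance from the root and therefore matches $A_R$ across the two graphs.
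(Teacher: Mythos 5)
Your proposal is correct and follows essentially the same route as the paper: transfer a radius-$R$ one-arm event through the ball isomorphism, bound its probability on $G_1$ via \cref{thm:polybound} (whose constants depend only on $M$ and $g$), and compare with the mean-field lower bound \eqref{eq:sharpness} on $G_2$. The only differences are cosmetic — the paper phrases the transfer as a coupling, counts $\lceil dR/2\rceil$ touched edges rather than $R$, and evaluates directly at $p=p_c(G_1)$ instead of letting $q\uparrow p_c(G_1)$.
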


Our proof will apply the following \emph{mean-field lower bound}: If $G$ is a connected, locally finite, transitive graph, $o$ is arbitrarily chosen root vertex of $G$, and $\theta_G(p)$ denotes the probability that $o$ is in an infinite cluster of $\omega_p$, then 
\begin{equation}
\label{eq:sharpness}
\theta_G(p) \geq \frac{p-p_c}{p(1-p_c)}
\end{equation}
for every $p_c<p\leq 1$.
The first inequality of this form was proven under more restrictive hypotheses by Chayes and Chayes \cite{MR864316}, with a more general version proven by Aizenman and Barsky \cite{aizenman1987sharpness}. The precise inequality \eqref{eq:sharpness} was proven by Duminil-Copin and Tassion \cite{duminil2015new}; it is a little stronger than the earlier results and also has a much simpler proof. (Note that one can prove locality from \cref{thm:polybound} without using this bound, but would not then get a quantitative estimate on the rate of convergence as we do here.)

\begin{proof}
We may assume that $R=R(G_1,G_2)\geq 1$, since the claim is trivial otherwise. In this case $G_1$ and $G_2$ must both have the same degree, which we denote by $d\leq M$.
Suppose without loss of generality that $p_c(G_1) > p_c(G_2)$. Let $o_1$ and $o_2$ be arbitrarily chosen root vertices of $G_1$ and $G_2$ respectively. Let $P_{p}^{G_1}(n)$ be the probability that the cluster of $o_1$ touches at least $n$ edges. Observe that we can clearly couple the percolation configurations on $G_1$ and $G_2$ so that if the cluster of $o_2$ is infinite then the cluster of $o_1$ must have diameter at least $R$, and must therefore contain at least $R$ vertices and touch at least $\left\lceil \frac{d}{2} R \right\rceil$ edges. Thus, by \eqref{eq:sharpness} and \cref{thm:polybound} we have that
\[
\frac{p-p_c(G_2)}{p(1-p_c(G_2))} \leq \theta_{G_2}(p) \leq P_p^{G_1}\left(\left\lceil \frac{d}{2} R \right\rceil\right) \leq C' \left[\frac{2}{dR}\right]^\delta
\]
for every $p_c(G_2) \leq p \leq 1$, 
where $C'=C'(M,g)$ and $\delta(M,g)$ are the constants from \cref{thm:polybound}. We conclude by setting $p=p_c(G_1)$ and rearranging. 
\end{proof}

\begin{remark}
\label{remark:rateofconvergence}
Explicitly, we obtain that if $p_1=p_c(G_1)\geq p_c(G_2)=p_2$ then
\[
|p_1-p_2|\leq p_1 (1-p_2)\sqrt{2} \left( 66 d  \left[ \frac{{2}}{(1-p_1) d R} \right]^{1/2} \right)^{1/2\alpha} \leq \frac{5 d^{1/8}p_1}{(1-\gr(G_1)^{-1})^{1/8}} R^{-\beta},
\]
where $\alpha=1- \log p_1/\log \gr(G_1)$ and $\beta = 1/4(1+\log(d-1)/\log \gr(G_1))$.
\end{remark}

\subsection{The nonunimodular case}

It remains to treat the case in which some of the graphs $(G_n)_{n\geq 1}$ are nonunimodular. This will be done by applying the results of \cite{Hutchcroftnonunimodularperc}, which give strong control of percolation on nonunimodular transitive graphs. In order to apply these results to prove locality, we will need to establish some relevant continuity properties of the modular function. We begin with some background on stationary random graphs, which we will then apply to study  the local structure of nonunimodular transitive graphs. Essentially, the purpose of the following discussion will be to give an alternative definition of the modular function, adapted from \cite{BC2011}, that will make its continuity properties more apparent. 

Recall that a \textbf{rooted graph} $(g,x)$ is a connected, locally finite graph $g$ together with a choice of distinguished root vertex $x$. A graph isomorphism between two rooted graphs is a rooted graph isomorphism if it preserves the root. Let $\fG_\bullet$ denote the space of isomorphism classes of rooted graphs. This space carries a natural topology, called the \textbf{local topology}, which is induced by the metric
\[
d_\mathrm{loc}\bigl((g_1,x_1),(g_2,x_2)\bigr) = R\bigl((g_1,x_1),(g_2,x_2)\bigr)^{-1}
\]
where, similarly to above, $R$ is the maximum radius such that the balls of radius $R$ around $x_1$ and $x_2$ in $g_1$ and $g_2$ respectively are isomorphic as rooted graphs. This is exactly the topology with which our graph limits were taken with respect to in \cref{conj:locality,thm:locality}. Similarly, we write $\fG_\bullet^\R$ for the space of rooted graphs $(g,x)$ that are equipped with a labeling $m$ of their oriented edges by real numbers. (This notation is nonstandard.) The space $\fG_\bullet^\R$ can also be equipped with a natural variant of the local topology, see e.g.\ \cite{AL07,CurienNotes} for details. We call a random variable taking values in $\fG_\bullet$ or $\fG_\bullet^\R$ a \textbf{random rooted graph} or \textbf{random rooted oriented-edge-labelled graph} as appropriate. We also define $\fG_{\bullet\bullet}$ to be the space of (isomorphism classes of) \textbf{doubly-rooted graphs}, that is, graphs with a distinguished ordered pair of root vertices, and define $\fG_{\bullet\bullet}^\R$ similarly. Again, these spaces carry natural variants of the local topology.

Let $(G,\rho)$ be a random rooted graph with law $\mu$. We say
that $(G,\rho)$ is \textbf{stationary} if it has the same distribution as $(G,X_1)$, where $X_1$ is the endpoint of the random oriented edge $\eta$ that is chosen uniformly at random from the set of oriented edges emanating from $\rho$. In particular, if $G$ is transitive and $o$ is an arbitrarily chosen root vertex of $o$ then $(G,o)$ is a stationary random rooted graph. Given a stationary random graph $(G,\rho)$ with law $\mu$, let $\mu_\rightarrow$ and $\mu_\leftarrow$ denote the laws of the random doubly-rooted graphs $(G,\rho,X_1)$ and $(G,X_1,\rho)$ respectively. It is shown in \cite{BC2011} that if $G$ has degrees bounded by $M$ a.s., then the measures $\mu_\rightarrow$ and $\mu_\leftarrow$ are absolutely continuous and that their Radon-Nikodym derivative $\frac{\dif \mu_\leftarrow}{\dif \mu_\rightarrow}$ satisfies
\begin{equation}
\label{eq:RNbound}
M^{-1} \leq \frac{\dif \mu_\leftarrow}{\dif \mu_\rightarrow}(g,x,y) \leq M
\end{equation}
for $\mu_\leftarrow$-a.e.\ doubly-rooted graph $(g,x,y)\in \fG_{\bullet\bullet}$. Moreover, it follows from \cite[Lemmas 4.1 and 4.2]{BC2011}  that for $\mu$-a.e.\ $(g,x)\in \fG_\bullet$, every $n\geq 1$ and every cycle $u_0 \sim u_1 \sim \cdots \sim u_n = u_0$ in $g$, we have that
\begin{equation}
\label{eq:RadonNikodymCocycle}
\prod_{i=0}^n \frac{\dif \mu_\leftarrow}{\dif \mu_\rightarrow}(g,u_i,u_{i+1})=1.
\end{equation}
Thus, we may define a function $\Delta_\mu(g,u,v)$ for $\mu$-a.e.\ $(g,x)\in \fG_\bullet$ and every two vertices $u,v$ of $g$ by
\begin{equation}
\label{eq:ModularRNdef}
\Delta_\mu(g,u,v)= \prod_{i=0}^n \frac{\dif \mu_\leftarrow}{\dif \mu_\rightarrow}(g,u_i,u_{i+1}),
\end{equation}
where $u=u_0\sim u_1 \sim \cdots \sim u_n =v$ is a path from $u$ to $v$ in $g$. The equality \eqref{eq:RadonNikodymCocycle} implies that the definition of $\Delta_\mu(g,u,v)$ is independent of the choice of path. We call $\Delta_\mu$ the \textbf{modular function} of $\mu$. 
It follows from \cite[Lemma 2.1]{Hutchcroftnonunimodularperc} that if $\mu$ puts all of its mass on a single deterministic transitive graph $(G,o)$, then the modular function $\Delta_\mu$ coincides with the modular function $\Delta_G$ of the transitive graph $G$, defined in \cref{section:unimodbackground}, in the sense that
\[
\Delta_\mu(g,u,v) = \mathbbm{1}(g \text{ isomorphic to } G) \Delta_G\bigl(\phi(u),\phi(v)\bigr) 
\]
for $\mu$-a.e.\ $(g,x)$ and all vertices $u,v$ of $g$, where $\phi$ is some isomorphism $g\to G$. (This is well defined as a function on $\fG_{\bullet\bullet}$ since the choice of $\phi$ does not affect the value obtained.)

Let $(G,\rho)$ be a stationary random rooted graph with law $\mu$. Then we can define a labeling of the oriented edges of $G$ by 
\[
m(e) = \Delta_\mu(G,e^-,e^+),
\] 
which is well-defined for every oriented edge $e$ of $G$ a.s.\ by \cite[Lemma 4.1]{BC2011}. Moreover, it is easily verified that $(G,\rho,m)$ is a stationary random rooted oriented-edge-labeled graph. 

\begin{prop}
\label{prop:modular}
Suppose that $((G_n,\rho_n))_{n\geq1}$ is a sequence of stationary random rooted graphs with laws $(\mu_n)_{n\geq1}$ converging in distribution to a stationary random rooted graph $(G,\rho)$ with law $\mu$, and suppose that there exists a constant $M$ such that all the graphs $(G_n)_{n\geq 1}$ and $G$ have degrees bounded by $M$ almost surely. Let $m_n$ and $m$ be the oriented edge-labellings of $G_n$ and $G$ that are defined in terms of the modular functions of $\mu_n$ and $\mu$ as above. Then $(G_n,\rho_n,m_n)$ converges locally to $(G,\rho,m)$.
\end{prop}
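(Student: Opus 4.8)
### Proof proposal for Proposition~\ref{prop:modular}

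The plan is to reduce local convergence of the edge-labelled graphs to local convergence of the underlying graphs together with convergence of the Radon–Nikodym derivatives $\frac{\dif \mu_\leftarrow}{\dif \mu_\rightarrow}$ on finite doubly-rooted neighbourhoods. Since the label of an oriented edge $e$ is the product of the one-step Radon–Nikodym derivatives along any path from $e^-$ to $e^+$ (by \eqref{eq:ModularRNdef} and \eqref{eq:RadonNikodymCocycle}), and since a ball of radius $r$ in $G_n$ around $\rho_n$ contains only boundedly many edges (degrees are bounded by $M$, so at most $M^r$ vertices and $M^{r+1}$ oriented edges), it suffices to show: for each fixed $r$, the joint distribution of the rooted ball of radius $r+1$ together with the collection of numbers $\left\{\frac{\dif \mu_{n,\leftarrow}}{\dif \mu_{n,\rightarrow}}(G_n, u, v) : u \sim v \text{ in } B(\rho_n, r+1)\right\}$ converges to the corresponding object for $\mu$. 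Because each such derivative lies in the compact interval $[M^{-1}, M]$ by \eqref{eq:RNbound}, and the label of an edge in $B(\rho,r)$ is a product of at most $2r$ of them (take a geodesic), continuity of the product map on $[M^{-1},M]^{2r}$ will then give local convergence of $(G_n,\rho_n,m_n)$.

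So the key step is to identify $\frac{\dif \mu_\leftarrow}{\dif \mu_\rightarrow}$ as a function that depends continuously on $\mu$ in a suitable sense and only on a bounded neighbourhood of the two roots. First I would recall from \cite{BC2011} the explicit form of this derivative: for a stationary random rooted graph of degrees bounded by $M$, one has (up to the standard mass-transport/reversibility computation)
\[
\frac{\dif \mu_\leftarrow}{\dif \mu_\rightarrow}(g,x,y) = \frac{\deg(x)}{\deg(y)} \cdot \frac{\text{(number of oriented edges from $x$ to $y$)}}{\text{(number of oriented edges from $y$ to $x$)}}
\]
when $x \sim y$; more precisely, for adjacent $x,y$ this ratio is a function only of the rooted isomorphism type of $(g,x,y)$ together with a ``local density'' factor that, for the stationary measures arising here, is determined by the degrees. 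The cleanest route is to argue directly: $(G_n,\rho_n) \to (G,\rho)$ in distribution means $(G_n,\rho_n,X_1^{(n)}) \to (G,\rho,X_1)$ in distribution as doubly-rooted graphs, i.e.\ $\mu_{n,\rightarrow} \to \mu_\rightarrow$ weakly on $\fG_{\bullet\bullet}$; by stationarity the same holds for $\mu_{n,\leftarrow} \to \mu_\leftarrow$. Since all these measures are supported on doubly-rooted graphs with degrees $\le M$, and the Radon–Nikodym derivative is uniformly bounded above and below by \eqref{eq:RNbound}, one concludes that the labelled versions converge: the map that reads off $m$ from the ball of radius $r+1$ is a bounded function that is continuous at $\mu$-a.e.\ point of $\fG_{\bullet\bullet}$ (the only discontinuities come from coincidences in the isomorphism type, which have measure zero), and so the portmanteau theorem applies.

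I expect the main obstacle to be making precise the sense in which $\frac{\dif \mu_\leftarrow}{\dif \mu_\rightarrow}(g,x,y)$ is ``local'' — i.e.\ that for adjacent $x,y$ it depends only on a finite neighbourhood of $\{x,y\}$, uniformly enough that weak convergence of $\mu_{n,\rightarrow}$ transfers to convergence of the labels. The subtlety is that a Radon–Nikodym derivative is an almost-everywhere-defined object, not a pointwise continuous function, so one cannot naively say it ``depends on the radius-$k$ ball.'' The resolution, which I would extract from \cite[Lemmas 4.1 and 4.2]{BC2011}, is that for stationary $\mu$ the derivative on adjacent vertices coincides $\mu$-a.s.\ with the explicit degree-ratio-type expression above, which manifestly depends only on the edge and its endpoints' degrees; once this explicit formula is in hand the continuity argument is routine, and the cocycle relation \eqref{eq:RadonNikodymCocycle} lets one propagate from adjacent pairs to all pairs within the ball. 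The remainder — assembling these into local convergence of $(G_n,\rho_n,m_n)$ via the metric $d_{\mathrm{loc}}$ on $\fG_\bullet^\R$ and the continuous-mapping/portmanteau theorem — is bookkeeping.
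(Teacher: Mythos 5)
There is a genuine gap, and it sits exactly at the point you flag as ``the main obstacle'': the claimed explicit formula
\[
\frac{\dif \mu_\leftarrow}{\dif \mu_\rightarrow}(g,x,y) \;=\; \frac{\deg(x)}{\deg(y)}\cdot\frac{\#\{\text{oriented edges } x\to y\}}{\#\{\text{oriented edges } y\to x\}}
\]
is false, and no formula of this kind (depending only on a bounded neighbourhood of the edge) can be correct. Take $\mu$ concentrated on a nonunimodular transitive graph, e.g.\ the grandparent graph: it is a simple regular graph, so your expression is identically $1$, yet by \cite[Lemma 2.1]{Hutchcroftnonunimodularperc} the derivative coincides with the modular function $\Delta_G$ of \cref{section:unimodbackground}, which is nonconstant across edges. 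The derivative $\frac{\dif\mu_\leftarrow}{\dif\mu_\rightarrow}$ is an object attached to the \emph{law} $\mu$ (for a transitive graph it is a ratio of stabilizer-orbit sizes), not a local functional of the rooted graph; this is precisely why the continuity asserted in \cref{prop:modular} and \cref{cor:modularcontinuity} is ``not at all obvious.'' For the same reason your ``cleanest route'' is circular: knowing $\mu_{n,\rightarrow}\to\mu_\rightarrow$ weakly does not let you apply portmanteau to ``the map that reads off $m$,'' because $m_n$ is defined through the Radon--Nikodym derivative of $\mu_n$ itself and is not a fixed ($\mu$-a.e.\ continuous) function on $\fG_{\bullet\bullet}$; weak convergence of measures does not imply any convergence of their Radon--Nikodym derivatives without further argument. \cite[Lemmas 4.1 and 4.2]{BC2011} give measurability and the cocycle identity \eqref{eq:RadonNikodymCocycle}, not a local formula, so they cannot rescue this step.

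The way around, which is what the paper does, is to avoid any pointwise continuity claim for the labels. Using \eqref{eq:RNbound} and the degree bound, the laws of the labelled graphs $(G_n,\rho_n,m_n)$ are tight in $\fG_\bullet^\R$, so one extracts a subsequential weak limit $(G',\rho',m')$, whose underlying rooted graph has law $\mu$. One then identifies the limiting label on the root edge by passing to the limit in the defining duality relation
\[
\E\bigl[F(G_n,X_{1,n},\rho_n)\bigr]=\E\Bigl[F(G_n,\rho_n,X_{1,n})\,m_n(\eta_n)\Bigr]
\]
for bounded continuous $F$ (legitimate because the edge labels are bounded), concluding that $m'(\eta')=\frac{\dif\mu_\leftarrow}{\dif\mu_\rightarrow}(G',\rho',X_1')$ a.s., and then propagates to all edges via the cocycle property \eqref{eq:ModularRNdef}. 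Your outer bookkeeping (bounded balls, labels in $[M^{-1},M]$ per edge, products along geodesics) is fine, but the core identification of the limit labels has to go through this tightness-plus-characterization argument rather than through a local formula that does not exist.
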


\begin{proof}
Let $\tilde \mu_n$ be the law of $(G_n,\rho_n,m_n)$ for each $n\geq 1$ and let $\tilde \mu$ be the law of $(G,\rho,m)$. It follows from \eqref{eq:RNbound} and the bounded degree assumption that the sequence of measures $(\tilde \mu_n)_{n\geq 1}$ is tight, and since $\fG_\bullet^\R$ is a Polish space \cite[Theorem 2]{CurienNotes} there exists a subsequence $\sigma(n)$ and a measure $\mu'$ such that $\mu_{\sigma(n)}\to\mu'$ weakly as $n\to\infty$. It suffices to prove that $\mu'=\tilde \mu$. Without loss of generality we may assume that $\sigma(n)=n$. Let $(G',\rho',m')$ a random variable with law $\mu'$. It is clear that $(G',\rho')$ has distribution $\mu$. Thus, by \eqref{eq:RadonNikodymCocycle}, \eqref{eq:ModularRNdef}, and \cite[Lemma 4.1]{BC2011}, to complete the proof it suffices to prove that if $\eta'$ is chosen uniformly from the set of oriented edges of $G'$ emanating from $\rho'$ and $X_1'$ is the other endpoint of $\eta'$ then
\begin{equation}
\label{eq:thingtocheck}
m'(\eta') = \frac{\dif\mu_\leftarrow}{\dif \mu_\rightarrow}(G',\rho',X_1')
\end{equation}
almost surely. To this end, let $F:\fG_{\bullet\bullet}\to \R$ be continuous and bounded. For each $n\geq 1$, let $\eta_n$ be  chosen uniformly from the set of oriented edges of $G_n$ emanating from $\rho_n$, and let $X_{1,n}$ be its endpoint. Then by definition of the Radon-Nikodym derivative we have that
\[
\E\left[ F(G_n,X_{1,n},\rho_n) \right] = \E\left[ F(G_n,\rho_n,X_{1,n}) \frac{\dif\mu_{n,\leftarrow}}{\dif \mu_{n,\rightarrow}}(G_n,\rho_n,X_{1,n})\right]
=
\E\left[ F(G_n,\rho_n,X_{1,n}) m_n(\eta_n)\right]
\]
for every $n\geq 1$, and taking $n\to\infty$ we obtain that, since $(G_n,\rho_n,m_n)$ converges weakly to $(G',\rho',m')$,
\[
\E\left[ F(G',X_{1}',\rho') \right] = \E\left[ F(G',\rho',X_{1}') m'(\eta')\right].
\]
Since $F$ was arbitrary this implies \eqref{eq:thingtocheck}, completing the proof.
\end{proof}

Now suppose that $G$ is a transitive graph. 
 Then it follows from the definition of the modular function given in \cref{section:unimodbackground} that if $G$ has degree $d$ then
\[
\Delta_{G}(u,v) \in \Bigl\{ \frac{a}{b} : a,b \in \{1,2,\ldots,d\} \Bigr\}
\]
for every every pair of neighbouring vertices $u,v$ in $G$. Since this set is finite, \cref{prop:modular} has the following immediate corollary.

\begin{corollary}
\label{cor:modularcontinuity}
Let $(G_n)_{n\geq1}$ be a sequence of transitive graphs converging locally to a transitive graph $G$ and let $(o_n)_{n\geq 1}$ and $o$ be arbitrarily chosen root vertices of $(G_n)_{n\geq 1}$ and $G$ respectively. Then 
for every $r\geq 1$ there exists $N<\infty$ such that for every $n\geq N$, there exists an isomorphism $\phi_n$ from the ball of radius $r$ around $o_n$ in $G_n$ to the ball of radius $r$ around $o$ in $G$ that sends $o_n$ to $o$ and satisfies
\begin{equation}
\Delta_{G_n}(u,v) = \Delta_G(\phi_n(u),\phi_n(v))
\end{equation}
for every $u,v$ in the ball of radius $r$ around $o_n$ in $G_n$.
\end{corollary}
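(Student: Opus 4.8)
The plan is to deduce \cref{cor:modularcontinuity} directly from \cref{prop:modular}, together with the single extra observation that on a transitive graph of degree $d$ the modular function takes values only in the finite set $S_d := \{a/b : a,b\in\{1,\dots,d\}\}$, so that labels which are merely \emph{close} (as guaranteed by local convergence of $\R$-edge-labelled graphs) must in fact be \emph{equal} once one works at a fine enough scale.

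First I would set up the application of \cref{prop:modular}. Since the degree of the root is determined by the ball of radius $1$, local convergence $G_n\to G$ forces $\deg(G_n)=\deg(G)=:d$ for all $n$ past some $N_1$, so we discard the finitely many earlier terms. For each $n$, regard the deterministic transitive graph $(G_n,o_n)$ as a stationary random rooted graph with law $\mu_n$ (stationarity is automatic by transitivity, as $(G_n,X_1)\cong(G_n,o_n)$) of degree bounded by $d$, and similarly $(G,o)$ with law $\mu$; local convergence of the graphs is exactly convergence in distribution $\mu_n\to\mu$. By the identification of $\Delta_{\mu_n}$ with $\Delta_{G_n}$ recalled before the statement (from \cite[Lemma 2.1]{Hutchcroftnonunimodularperc}), the edge-labellings produced by \cref{prop:modular} are $m_n(e)=\Delta_{G_n}(e^-,e^+)$ and $m(e)=\Delta_G(e^-,e^+)$, and \cref{prop:modular} gives $(G_n,o_n,m_n)\to(G,o,m)$ in the local topology on $\fG_\bullet^\R$.

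Next comes the discreteness step. Set $\eps_d := \min\{|s-s'| : s\neq s'\in S_d\}>0$ and fix $R\geq r$ large enough (in terms of $r$ and $d$) that local convergence at this resolution yields, for all $n$ past some $N\geq N_1$, a rooted isomorphism $\psi_n$ from the ball of radius $R$ around $o_n$ in $G_n$ onto the ball of radius $R$ around $o$ in $G$ with $|m_n(e)-m(\psi_n e)|<\eps_d$ for every oriented edge $e$ of that ball. Since $m_n(e),m(\psi_n e)\in S_d$, this forces $m_n(e)=m(\psi_n e)$, i.e.\ $\Delta_{G_n}(e^-,e^+)=\Delta_G(\psi_n e^-,\psi_n e^+)$ for all such $e$. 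Let $\phi_n$ be the restriction of $\psi_n$ to the ball of radius $r$ around $o_n$; as $\psi_n$ fixes the root and preserves graph distances, $\phi_n$ is an isomorphism onto the ball of radius $r$ around $o$ sending $o_n$ to $o$. Finally, to pass from neighbouring pairs to arbitrary pairs $u,v$ in that ball, concatenate a geodesic from $u$ to $o_n$ with a geodesic from $o_n$ to $v$ to get a walk $u=u_0\sim\cdots\sim u_k=v$ lying entirely inside the ball; using that $\Delta_{G_n}$ is multiplicative along walks (equivalently, by \eqref{eq:RadonNikodymCocycle}--\eqref{eq:ModularRNdef} and the identification above) and that $\phi_n$ maps this walk to a walk in the ball around $o$ with matching edge labels, we obtain
\[
\Delta_{G_n}(u,v)=\prod_{i=0}^{k-1}\Delta_{G_n}(u_i,u_{i+1})=\prod_{i=0}^{k-1}\Delta_G(\phi_n u_i,\phi_n u_{i+1})=\Delta_G(\phi_n u,\phi_n v),
\]
which is the desired conclusion.

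I do not anticipate a genuine obstacle: all the substance is contained in \cref{prop:modular}, and what remains is bookkeeping that upgrades local convergence of the $\R$-labelled graphs to an honest label-preserving isomorphism on a fixed finite ball. The only point requiring care is that the local topology on $\fG_\bullet^\R$ controls edge labels only up to a small error that shrinks with the radius, so one must first pass to a working radius $R$ large enough relative to the spacing $\eps_d$ of the finite label set, and only then restrict back down to radius $r$.
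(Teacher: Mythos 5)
Your proposal is correct and follows essentially the same route as the paper, which deduces \cref{cor:modularcontinuity} directly from \cref{prop:modular} together with the observation that $\Delta_{G}$ takes values in the finite set $\{a/b : a,b\in\{1,\dots,d\}\}$ on neighbouring vertices, so that approximate agreement of labels under local convergence upgrades to exact agreement. Your extra bookkeeping (choice of radius relative to the label spacing, and extending from edges to arbitrary pairs in the ball via multiplicativity of $\Delta$ along paths) simply spells out what the paper leaves implicit in calling the corollary immediate.
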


Note that this property is not at all obvious from the algebraic definition of $\Delta$ given in \cref{section:unimodbackground}! A further immediate corollary is as follows, which implies that the set of unimodular transitive graphs is both closed and open as a subset of the space of all transitive graphs.

\begin{corollary}
\label{cor:clopen}
Let $(G_n)_{n\geq1}$ be a sequence of transitive graphs converging locally to a transitive graph $G$. If $G_n$ is unimodular for infinitely many $n$ then $G$ is unimodular, while if $G_n$ is nonunimodular for infinitely many $n$ then $G$ is nonunimodular.
\end{corollary}

We now combine \cref{cor:modularcontinuity} with the results of \cite{Hutchcroftnonunimodularperc} to prove the following theorem.

\begin{thm}
\label{thm:localitynonunimod}
Let $(G_n)_{n\geq 1}$ be a sequence of transitive graphs converging locally to a nonunimodular transitive graph $G$. Then $p_c(G_n)\to p_c(G)$ as $n\to\infty$.
\end{thm}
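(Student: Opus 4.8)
The plan is to reduce the nonunimodular case of locality to the already-established lower semi-continuity together with the quantitative supercritical control of percolation on nonunimodular transitive graphs provided by \cite{Hutchcroftnonunimodularperc}. Since lower semi-continuity $\liminf_{n\to\infty}p_c(G_n)\geq p_c(G)$ holds for \emph{any} locally convergent sequence of transitive graphs (via the mean-field lower bound \eqref{eq:sharpness}, cf.\ Pete and Duminil-Copin--Tassion), and since $\limsup_{n\to\infty}p_c(G_n)<1$ automatically here because $p_c\leq \gr^{-1}$ and exponential growth is inherited in the limit (a nonunimodular transitive graph has exponential growth, and growth is continuous along locally convergent sequences), it suffices to prove the upper semi-continuity bound $\limsup_{n\to\infty}p_c(G_n)\leq p_c(G)$.

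First I would recall the key outputs of \cite{Hutchcroftnonunimodularperc}: on a nonunimodular transitive graph, percolation at $p_c$ has no infinite clusters, and more quantitatively, for $p<p_c(G)$ the cluster of a vertex in a ``heavy'' direction of the modular function has a tail bound on its volume (or on the number of vertices it contains at a given ``height'' relative to the modular function) with constants depending only on the degree bound $M$ and on a lower bound for the ``nonunimodularity'' — equivalently, on the range of values taken by $\Delta_G$ on neighbouring vertices. The point of \cref{cor:modularcontinuity} is precisely that this local data about $\Delta$ is eventually constant along the sequence $G_n\to G$: for $n$ large, the ball of radius $r$ around $o_n$ in $G_n$ is isomorphic to that in $G$ \emph{together with the restriction of the modular function}. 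Thus for $n$ large, $G_n$ is nonunimodular (by \cref{cor:clopen}) with the same degree and the same local modular data as $G$.

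The main steps are then: (1) fix $p>p_c(G)$; by \eqref{eq:sharpness}, $\theta_G(p)>0$, so percolation on $G$ at density $p$ has an infinite cluster a.s., and in fact by the quantitative estimates of \cite{Hutchcroftnonunimodularperc} one can find a finite radius $r$ and a constant $c>0$ such that the ball of radius $r$ around $o$ in $G$ is connected to distance $R$ (for large $R$) with probability bounded below, using only the local structure out to radius $r$ together with an iterative/renormalization scheme whose contraction estimates depend only on $M$ and the local modular data; (2) invoke \cref{cor:modularcontinuity} to find $N$ so that for $n\geq N$, the ball of radius $r$ around $o_n$ in $G_n$ is modular-function-isomorphic to that in $G$, so the same renormalization scheme runs on $G_n$ at density $p$ and produces an infinite cluster with positive probability; (3) conclude $p_c(G_n)\leq p$ for all $n\geq N$, hence $\limsup_{n\to\infty}p_c(G_n)\leq p$, and let $p\downarrow p_c(G)$.

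The main obstacle is step (1)--(2): one needs the supercritical percolation result of \cite{Hutchcroftnonunimodularperc} in a form that is both \emph{local} (the event witnessing supercriticality depends only on a bounded neighbourhood, so that it transfers to $G_n$ once the neighbourhoods agree) and \emph{quantitatively uniform} (the constants in the renormalization or in the relevant finite-energy/Grimmett--Marstrand-type argument depend only on $M$ and the finitely many possible values of $\Delta$, not on finer features of $G$). Extracting such a statement may require revisiting the proofs in \cite{Hutchcroftnonunimodularperc} rather than merely quoting a theorem; alternatively, one argues more softly by contradiction — if $p_c(G_{n_k})>p$ along a subsequence, then $\theta_{G_{n_k}}(p)=0$, and one derives a contradiction with $\theta_G(p)>0$ by a weak-convergence/coupling argument showing that the relevant connection probabilities on $G_{n_k}$ converge to those on $G$ at density $p$, using that the ambient geometry and modular function converge locally. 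Either way, the delicate point is passing from the \emph{existence} of an infinite cluster on the limit $G$ to a uniform, locally-certified lower bound that survives the limit.
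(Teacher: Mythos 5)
You have the right scaffolding (reduce to upper semi-continuity, use \cref{cor:clopen} and \cref{cor:modularcontinuity} to transfer the local modular data), but the core of the argument is missing, and neither of the two routes you sketch for filling it works as stated. The paper does not run any renormalization on $G$ or $G_n$, nor does it need a ``locally certified'' witness of an infinite cluster. Instead it uses the half-space quantity $A_p^G(t)=\bP_p(o \xleftrightarrow{H_0} H_t)$ from \cite{Hutchcroftnonunimodularperc}: this is supermultiplicative in $t$, so by Fekete's lemma the decay rate satisfies $\alpha_p(G)=-\sup_{t\geq 1}\frac{1}{t}\log A_p^G(t)$, and moreover $A_p^G(t)=\sup_{r\geq 1}A_p^G(t,r)$, where $A_p^G(t,r)$ only depends on the ball of radius $r$ together with the modular function on it. Thus $\alpha_p$ is (up to sign) a \emph{supremum of local quantities}, and \cref{cor:modularcontinuity} immediately gives $\liminf_{n\to\infty}\alpha_p(G_n)\leq \alpha_p(G)$. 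The results of \cite{Hutchcroftnonunimodularperc} then supply exactly the quantitative criterion you were looking for: $\alpha_{p_c}=1$ for \emph{every} nonunimodular transitive graph, and $\alpha_p$ is strictly decreasing in $p$ where positive. Hence $p>p_c(G)$ forces $\alpha_p(G)<1$, so $\alpha_p(G_n)<1$ and therefore $p\geq p_c(G_n)$ along the sequence (and along any subsequence), giving $\limsup_n p_c(G_n)\leq p_c(G)$. The semicontinuity comes for free from supermultiplicativity; no uniform-in-$G$ contraction estimates are needed.

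By contrast, your primary route asks for a Grimmett--Marstrand-type renormalization for nonunimodular transitive graphs with constants depending only on $M$ and the local modular data; no such theorem is available, and ``revisiting the proofs'' of \cite{Hutchcroftnonunimodularperc} to extract one would be a substantial new result rather than a step one can assume. Your fallback is worse: the soft contradiction via ``connection probabilities on $G_{n_k}$ converge to those on $G$'' fails in the direction you need, because $\theta_{G}(p)$ and connection-to-infinity probabilities are \emph{infima} (decreasing limits) of local quantities; local convergence only yields $\limsup_n\theta_{G_n}(p)\leq\theta_G(p)$, so $\theta_G(p)>0$ gives no lower bound on $\theta_{G_n}(p)$. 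If such a weak-convergence argument worked, it would prove Schramm's conjecture in full generality with no hypotheses at all -- this non-locality of the event of having an infinite cluster is precisely the difficulty the theorem addresses. The missing idea, then, is the choice of a supercriticality criterion expressible as a supremum of local quantities, namely the sharp identity $\alpha_{p_c}=1$ for the half-space decay rate, which your proposal neither identifies nor replaces.
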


We begin by recalling the relevant results from \cite{Hutchcroftnonunimodularperc}. 
Let $G$ be a nonunimodular transitive graph with modular function $\Delta$. 
For each $t\in \R$ we define the \textbf{upper half-space} $H_t = \{v \in V: \log \Delta(o,v) \geq t \}$. For each $t\geq 0$, let 
\[A_p(t) = A_p^G(t) := \bP_p(o \xleftrightarrow{H_0} H_t)\]
be the probability that $o$ is connected to $H_t$ by an open path using only edges both endpoints of which are contained in $H_0$. Similarly, for each $t\geq 0$ and $r\geq 0$ let $A_p^G(t,r)$ be the probability that $o$ is connected to $H_t$ by an open path of length at most $r$ using only edges both endpoints of which are contained in $H_0$. 
The proof of \cite[Lemma 5.2]{Hutchcroftnonunimodularperc} yields that $A_p(t)$ satisfies the supermultiplicative inequality $A_p(t+s)\geq A_p(t)A_p(s)$ for every $t,s\geq 0$. Applying Fekete's Lemma, it follows that the limit
\[
\alpha_p(G):=-\lim_{t\to \infty} \frac{1}{t} \log A_p(t) = -\sup_{t\geq 1} \frac{1}{t} \log A_p(t)
\]
is well-defined. (We caution the reader not to confuse this use of the letter $\alpha$ with the notation used in Remarks \ref{remark:exponent} and \ref{remark:rateofconvergence}.) It follows from \cite[Theorem 1.8 and Corollary 5.10]{Hutchcroftnonunimodularperc} that $\alpha_{p_c}(G)=1$ for every transitive nonunimodular graph $G$, so that in particular
\begin{equation}
\label{eq:Feketebound}
A_p(t) \leq e^{-t} \qquad \text{ for every $p\leq p_c$ and $t\geq 0$.}
\end{equation}
(In \cite{Hutchcroftnonunimodularperc} the more complicated quasi-transitive case is treated; both the proof and the bounds obtained can be simplified substantially in the transitive case.) Moreover, it follows from \cite[Theorem 2.38]{grimmett2010percolation} that $\alpha_p(G)$ is a strictly decreasing function of $p$ when it is positive.

\begin{proof}[Proof of \cref{thm:localitynonunimod}]
As discussed in the introduction, the estimate $\liminf_{n\to\infty} p_c(G_n) \geq p_c(G)$ follows from general considerations (see \cite[Page 4]{duminil2015new} and \cite[\S14.2]{Pete}), and so it suffices to prove that $\limsup_{n\to\infty} p_c(G_n) \leq p_c(G)$. 
It follows from \cref{cor:clopen} that $G_n$ is nonunimodular for every sufficiently large $n$, and so we may suppose without loss of generality that $G_n$ is nonunimodular for every $n\geq 1$. It follows from \cref{cor:modularcontinuity} that 
$\lim_{n\to\infty} A_p^{G_n}(t,r) = A_p^G(t,r)$ for every fixed $p\in [0,1]$ and $t,r\geq 0$, and we deduce that
\[
\limsup_{n\to\infty}  A_p^{G_n}(t) = \limsup_{n\to\infty} \sup_{r\geq 1} A_p^{G_n}(t,r) \geq \sup_{r\geq 1} \limsup_{n\to\infty}  A_p^{G_n}(t,r) = \sup_{r\geq 1}   A_p^{G}(t,r)=A_p^G(t)
\]
for every $p\in [0,1]$ and $t\geq 0$. Thus, we obtain that
\begin{multline*}
\liminf_{n\to\infty} \alpha_p(G_n) = - \limsup_{n\to\infty} \sup_{t\geq 1} \frac{1}{t} \log A^{G_n}_p(t) \leq - \sup_{t\geq 1}  \limsup_{n\to\infty} \frac{1}{t} \log A^{G_n}_p(t)\\ \leq - \sup_{t\geq 1}   \frac{1}{t} \log A^{G}_p(t) = \alpha_p(G).
\end{multline*}
If $p>p_c(G)$ then $\alpha_p(G)<1$ as discussed above, so that $ \alpha_p(G_n)<1$ for infinitely many $n$. Thus, it follows from the results of \cite{Hutchcroftnonunimodularperc} that $p \geq \limsup_{n\to\infty}p_c(G_n)$ as required.
\end{proof}

\begin{proof}[Proof of \cref{thm:locality}]
This is an immediate consequence of \cref{cor:rateofconvergence,cor:clopen,thm:localitynonunimod}.
\end{proof}

\section{Closing remarks}

\begin{remark}
\label{remark:improvedtwoghost}
Various improvements to the inequalities of \cref{thm:twoghostunimod,cor:twoarmunimod} are possible, which we discuss briefly now.
\begin{enumerate}[leftmargin=*]
\item The proof of \cref{thm:twoghostunimod,cor:twoarmunimod} also applies \emph{mutatis mutandis} to \emph{reversible random rooted graphs}. See e.g.\ \cite{AL07,BC2011,CurienNotes} for introductions to this notion. Running the proof in this setting, we obtain that if $(G,\rho)$ is a reversible random rooted graph and $\eta$ is chosen uniformly at random from the set of oriented edges of $G$ emanating from $\rho$, then 
\begin{equation}
\label{eq:reversible}
\P_{p,h}(\sT_\eta) \leq 33 \left[\frac{1-p}{p} h\right]^{1/2}
\quad \text{ and } \quad \P_{p}(\sS_{\eta,n}) \leq 66 \left[\frac{1-p}{pn} \right]^{1/2}
\end{equation}
for every $p\in (0,1]$, $h> 0$, and $n\geq 1$. Similar results can be deduced for unimodular random rooted graphs of finite expected degree by applying the usual dictionary to translate between unimodularity and reversibility, see e.g.\ \cite{BC2011}. In particular, one can apply these results to obtain analogues of \cref{thm:twoghostunimod,cor:twoarmunimod} for the random cluster model, which can be viewed as Bernoulli bond percolation in a random environment. We plan to apply these bounds in forthcoming work. 

\item When $G$ is an amenable transitive graph, one can apply a standard exhaustion argument to remove the condition that the clusters are finite from \cref{thm:twoghostunimod,cor:twoarmunimod}, so that we obtain a new proof of uniqueness from our methods that gives better quantitative control of two-arm events than the classical proofs of \cite{MR901151,burton1989density}. Indeed, since $G$ is amenable there exists a sequence of a.s.\ finite reversible random rooted graphs $(G_n,\rho_n)_{n\geq 1}$ that are \emph{subgraphs} of $G$ and that almost surely converge to $G$ locally as $n\to\infty$. (To construct such a sequence, simply take a F{\o}lner sequence for $G$ and choose a random root from each F{\o}lner set according to the stationary measure on the set.) Applying \eqref{eq:reversible} to each of the graphs $(G_n,\rho_n)$ and then sending $n\to\infty$, one obtains that percolation on $G$ satisfies
\begin{equation}
\label{eq:reversible2}
\P_{p,h}(\sT_\eta^\infty) \leq 33 \left[\frac{1-p}{p} h\right]^{1/2}
\quad \text{ and } \quad \P_{p}(\sS_{\eta,n}^\infty) \leq 66 \left[\frac{1-p}{pn} \right]^{1/2}
\end{equation}
for every $p\in (0,1]$, $h>0$ and $n\geq 1$, where $\sT_e^\infty$ and $\sS^\infty_{e,n}$ denote the events that $e$ is closed and that the endpoints of $e$ are in distinct (not necessarily finite) clusters each of which either touches some green edge or touches at least $n$ edges, respectively. Similar inequalities hold  for invariantly amenable unimodular random rooted graphs of finite expected degree.

 Note that is is crucially important that the graphs $(G_n)_{n\geq 1}$ are \emph{subgraphs} of $G$ for this argument to work. This allows us to couple percolation on each of the graphs $(G_n,\rho_n)$ with percolation on $(G,\rho)$ in such a way that the clusters of the endpoints of $\rho$ are distinct in $G$ if and only if they are distinct in $G_n$ for every $n\geq 1$; this property is needed to deduce \eqref{eq:reversible2} by taking limits in \eqref{eq:reversible}. Indeed, one can also approximate, say, a $3$-regular tree by finite reversible random graphs, but the argument does not work in this case since the two clusters at either end of an edge could merge in the finite graphs far away from the edge without merging in the limiting tree. Of course, we also know that the argument cannot work in this setting since we have infinitely many infinite clusters on the $3$-regular tree when $p>p_c=1/2$.

\item If one assumes that $G$ is unimodular and that $\bP_{p_c}(|E(K_o)|\geq n) \leq C_1n^{-\gamma}$ for some $\gamma < 1/2$ and $C_1<\infty$, then one can improve upon \eqref{eq:twobodyunimod} to obtain that
\begin{equation}
\label{eq:improved}
\bP_{p_c,h}(\sT_e) \leq  C_2h^{\gamma+1/2} \quad \text{ and } \quad  \bP_{p_c}(\sS_{e,n}) \leq  C_2n^{-\gamma-1/2}
\end{equation}
for some constant $C_2$ and every $h>0$ and $n\geq 1$.
This improves substantially on the bound that one obtains from the BK inequality, which gives $\bP_{p_c}(\sS_{e,n}) \leq C^2_1 n^{-2\gamma}$. 
To obtain such an improved bound, first note that in this case one can 
% % use the bound $\bE_{p_c} [T \wedge n] \leq \frac{C}{1-\gamma} n^{1-\gamma}$ instead of the trivial inequality $\bE_{p_c} [T \wedge n] \leq n$.
%  note that one can include the indicator $\mathbbm{1}(T\geq 2^k)$ inside the expectation on the right of \eqref{eq:scales}. One can
bound $\bE_{p_c}[T\wedge n] \leq C_3 n^{1-\gamma}$ for some constant $C_3$ instead of using the trivial bound $\bE_{p_c}[T\wedge n] \leq n$. Applying Cauchy-Schwarz we obtain that
\begin{multline*}
% \bE_{p_c}\left[ \frac{|Z_T|}{T}\bigl(1-e^{-hT}\bigr)\mathbbm{1}(T<\infty) \right] 
% \leq 
\sum_{k\geq 0} \frac{1-e^{-h2^{k}}}{2^k} \bE_{p_c}\left[ \max_{0\leq n \leq 2^{k+1}} |Z_n| \mathbbm{1}(T\geq 2^k) \right]
\\\leq 
2\sqrt{p_c(1-p_c)}\sum_{k\geq 0} \frac{1-e^{-h2^{k}}}{2^k} \bE_{p_c}[T\wedge 2^{k+1}]^{1/2}
\bP_{p_c}(T\geq 2^k)^{1/2}
\leq 
C_4\sum_{k\geq 0} \frac{1-e^{-h2^{k}}}{2^{(1+2\gamma)k/2}}
\end{multline*}
for some constant $C_4$ and every $h>0$. 
The rest of the proof proceeds similarly to before.
\end{enumerate}
\end{remark}

\begin{remark}
\label{remark:improvedpolybound}
Applying the bound \eqref{eq:improved} in place of \eqref{eq:twobodyunimod} in the proof of \cref{thm:polybound}, one can make iterative improvements to the exponent bound that the argument yields. By applying this iterative procedure an arbitrarily large number of times, one obtains the bound
\[
\bP_{p_c} \bigl(|E(K_o)|\geq n\bigr) \leq n^{-1/(4\alpha-2)+o(1)}
\]
that was mentioned in \cref{remark:exponent}.
\end{remark}

\begin{remark}
Applying the results of \cite{1709.10515} instead of those of \cite{Hutchcroftnonunimodularperc}, we obtain the following locality result for the self-avoiding walk connective constant. 
\begin{thm}
Let $(G_n)_{n\geq 1}$ be a sequence of transitive graphs converging locally to a nonunimodular transitive graph $G$. Then $\mu_c(G_n)\to \mu_c(G)$ as $n\to\infty$.
\end{thm}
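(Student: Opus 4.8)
The plan is to mimic the proof of \cref{thm:localitynonunimod}, replacing Bernoulli percolation by the self-avoiding walk model and the structural input of \cite{Hutchcroftnonunimodularperc} by the corresponding results of \cite{1709.10515}. It is convenient to work with the reciprocal parameter $z_c(G)=1/\mu_c(G)$, so that the claim is equivalent to $z_c(G_n)\to z_c(G)$. One direction is elementary and does not use nonunimodularity: the number $c_N(H)$ of self-avoiding walks of length $N$ from the root of a transitive graph $H$ depends only on the rooted ball of radius $N$, and $\mu_c(H)=\inf_N c_N(H)^{1/N}$ by submultiplicativity and Fekete's lemma; hence local convergence gives $c_N(G_n)=c_N(G)$ for all $n$ large (with $N$ fixed), so $\limsup_{n\to\infty}\mu_c(G_n)\le \inf_N c_N(G)^{1/N}=\mu_c(G)$, i.e.\ $\liminf_{n\to\infty} z_c(G_n)\ge z_c(G)$. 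It therefore remains to prove the upper bound $\limsup_{n\to\infty} z_c(G_n)\le z_c(G)$.

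For this I would follow \cref{thm:localitynonunimod} line by line. By \cref{cor:clopen} we may assume every $G_n$ is nonunimodular. Writing $H_t=\{v:\log\Delta(o,v)\ge t\}$ for the upper half-spaces as in that proof, the role of the half-space connection probability $A_p^G(t)$ is now played by the half-space self-avoiding walk generating function: for $z\ge 0$ and $t\ge 0$ let $A_z^G(t)$ be the sum of $z^{|\gamma|}$ over all self-avoiding walks $\gamma$ from $o$ contained in $H_0$ and ending in $H_t$, and let $A_z^G(t,N)$ be the analogous sum restricted to walks of length at most $N$. The paper \cite{1709.10515} supplies the structural facts required: $A_z^G$ satisfies a supermultiplicative-type inequality in $t$ (via a Kesten-style bridge decomposition), so that $\alpha_z(G):=-\lim_{t\to\infty}\tfrac1t\log A_z^G(t)=-\sup_{t\ge1}\tfrac1t\log A_z^G(t)$ is well-defined; one has $A_z^G(t)\le e^{-t}$, equivalently $\alpha_z(G)\ge 1$, for all $z\le z_c(G)$ (the analogue of \eqref{eq:Feketebound}); one has $\alpha_{z_c}(G)=1$ for every nonunimodular transitive graph (the analogue of \cite[Theorem 1.8 and Corollary 5.10]{Hutchcroftnonunimodularperc}); and, combining this with monotonicity of $\alpha_z$ in $z$, one has $\alpha_z(G)<1$ for every $z>z_c(G)$ (this may simply be $-\infty$).

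The locality bookkeeping then proceeds exactly as before. Since $A_z^G(t,N)$ is determined by the ball of radius $N$ around $o$ together with the restriction of $\log\Delta$ to that ball, \cref{cor:modularcontinuity} gives $A_z^{G_n}(t,N)=A_z^G(t,N)$ for all $n$ large (with $z,t,N$ fixed), and hence
\[
\limsup_{n\to\infty}A_z^{G_n}(t)\ \ge\ \sup_{N\ge1}\limsup_{n\to\infty}A_z^{G_n}(t,N)\ =\ \sup_{N\ge1}A_z^G(t,N)\ =\ A_z^G(t)
\]
for every $z\ge0$ and $t\ge0$, which yields $\liminf_{n\to\infty}\alpha_z(G_n)\le \alpha_z(G)$ on applying $-\tfrac1t\log(\cdot)$ and taking a supremum over $t\ge1$. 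Now fix $z>z_c(G)$, so $\alpha_z(G)<1$. If $\limsup_{n\to\infty}z_c(G_n)>z$ we may pass to a subsequence along which $z_c(G_n)>z$; along it $\alpha_z(G_n)\ge1$ for every $n$ by the bound $A_z^{G_n}(t)\le e^{-t}$, contradicting $\liminf_n\alpha_z(G_n)\le\alpha_z(G)<1$ (which still holds along the subsequence, as it too converges locally to $G$). Hence $\limsup_{n\to\infty}z_c(G_n)\le z$ for every $z>z_c(G)$, so $\limsup_{n\to\infty}z_c(G_n)\le z_c(G)$, which together with the first paragraph gives $z_c(G_n)\to z_c(G)$ and therefore $\mu_c(G_n)\to\mu_c(G)$.

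The main obstacle is not the locality argument, which is a faithful copy of \cref{thm:localitynonunimod}, but making sure that \cite{1709.10515} genuinely delivers the structural inputs listed above in the transitive setting: in particular the identity $\alpha_{z_c}(G)=1$ and the half-space upper bound $A_z^G(t)\le e^{-t}$ for $z\le z_c$, together with the supermultiplicativity of the half-space generating function, where the self-avoidance constraint forces one to work with bridges rather than a naive concatenation. As in the percolation case the cited paper is phrased in greater generality, so some simplification and translation into the form used here is required, but no genuinely new idea beyond those in \cite{1709.10515} and the present argument should be needed.
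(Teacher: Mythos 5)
Your proposal is correct and follows essentially the same route as the paper: the paper likewise deduces the hard direction by running the proof of \cref{thm:localitynonunimod} verbatim with the half-space decay rate $\alpha_w(z)$ of \cite{1709.10515} in place of $\alpha_p$, quoting from that reference exactly the inputs you list ($\alpha_w(z_c)=1$, the Fekete-type supermultiplicative structure giving $A_z(t)\leq e^{-t}$ for $z\leq z_c$, and strict monotonicity of $\alpha_w$ where positive), while the easy direction is the elementary Fekete/ball-counting argument you give. Your only imprecision is invoking ``monotonicity'' where strict decrease of $\alpha_z$ (when positive) is what is needed, but since you state and use the correct conclusion $\alpha_z(G)<1$ for $z>z_c(G)$, which is exactly what \cite[Lemma 2.4]{1709.10515} supplies, this is not a gap.
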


\begin{proof}
The exponential rate of decay $\alpha_w(z)$ defined in eq.\ (2.4) of \cite{1709.10515} plays the role that $\alpha_p$ played in the proof of \cref{thm:localitynonunimod}. The equality $\alpha_w(z_c)=1$ follows from eq.\ (2.9) of \cite{1709.10515}, and is the direct analogue of the fact that $\alpha_{p_c}=1$ that we quoted from \cite{Hutchcroftnonunimodularperc}. Finally, the fact that $\alpha_w(z)$ is strictly decreasing when it is positive is given in \cite[Lemma 2.4]{1709.10515}. With these facts in hand, the proof proceeds identically to that of \cref{thm:localitynonunimod}. (Again, the use of the letter $\alpha$ in this proof should not be confused with the notation for the exponents appearing in Remarks \ref{remark:exponent} and \ref{remark:rateofconvergence}.)
\end{proof}

See \cite{1412.0150,1704.05884} for further results regarding locality of the connective constant.
\end{remark}

% \begin{question}[I.\ Benjamini]
% Investigate the locality properties of $p_u$.
% \end{question}

\subsection*{Acknowledgments}
We thank Jonathan Hermon for many helpful discussions, and for his careful reading of an earlier version of this manuscript. 
This paper also greatly benefited from discussions with Vincent Tassion on rewriting the Aizenman-Kesten-Newman uniqueness proof with martingale techniques, which took place at the Isaac Newton Institute during the RGM follow up workshop.
We also thank Itai Benjamini, Hugo Duminil-Copin, Gady Kozma, Russ Lyons, Sebastien Martineau, Asaf Nachmias, and an anonymous referee for comments on earlier versions of the paper.

 \setstretch{1}
 \footnotesize{
  \bibliographystyle{abbrv}
  \bibliography{unimodularthesis.bib}
}
\end{document}